%BeginFileInfo
%%Publisher=ARXIV
%%Project=AAP
%%Manuscript=AAP1138
%EndFileInfo
%
% Institute of Mathematical Statistics (IMI)
% Journal "The Annals of Applied Probabability"

%secthm,,secfloat,nameyear,number,noautosecdot
\input ./style/arxiv-general.cfg
\documentclass[aap,MSNbibl,seceqn,dvips]{arximspdf}
\makeatletter
   \@ifpackageloaded{graphicx}{}{\usepackage{graphicx}}
\makeatother
%\usepackage{}

% settings
%

% article settings
\doi{10.1214/15-AAP1138}% Updated by VTEXPTS2LaTeX.exe, 09.10.2015
%08:47
\volume{26}
\issue{4}
\pubyear{2016}
\firstpage{1996}
\lastpage{2029}
\docsubty{FLA}

\makeatletter
%\innerskeltrue
\newcommand{\rrvert}{\vert}
\newcommand{\llvert}{\vert}
\def\cal{\mathcal}

\newtheorem{theorem}{Theorem}[section]
\newtheorem{lemma}{Lemma}[section]
\newproclaim{example}{Example}[section]

\newtheorem{proposition}{Proposition}[section]
\newproclaim{remark}{Remark}
\newproclaim{definition}{Definition}[section]
\newproclaim{assumption}{Assumption}[section]
\newcommand{\cA}{{\mathcal A}}
\newcommand{\cF}{{\mathcal F}}
\newcommand{\cH}{{\mathcal H}}
\newcommand{\cP}{{\mathcal P}}
\newcommand{\bbE}{\mathbb{E}}
\newcommand{\eps}{\varepsilon}
\newcommand{\bbP}{\mathbb{P}}
\newcommand{\bbQ}{\mathbb{Q}}
\newcommand{\Om}{\Omega}
\newcommand{\bbR}{\mathbb{R}}
\newcommand{\what}{\widehat}
\def\rar{\rightarrow}

\newcommand{\chf}{\mathbf{1}}
\newcommand{\eid}{\stackrel{d}{=}}
\makeatother

\begin{document}
\begin{frontmatter}

%\dochead{}
\title{Markovian Nash equilibrium in financial markets with asymmetric
information and related forward--backward systems}
\runtitle{Asymmetric information and forward--backward systems}

\begin{aug}
% Corresponding author: Umut Cetin - u.cetin@lse.ac.uk% Updated by
%VTEXPTS2LaTeX.exe, 09.10.2015 08:47
\author[A]{\fnms{Umut} \snm{\c{C}etin}\corref{}\ead[label=e1]{u.cetin@lse.ac.uk}}
\and
\author[A]{\fnms{Albina} \snm{Danilova}\ead[label=e2]{a.danilova@lse.ac.uk}}
\runauthor{U. \c{C}etin and A. Danilova}
\affiliation{London School of Economics and Political Science}
%\author[A]{\fnms{}~\snm{}\ead[label=e1]{}}%,
%\author[]{\fnms{}~\snm{}\ead[label=]{}}
% \and
%\author[]{\fnms{}~\snm{}\ead[label=]{}}
%\runauthor{}
%\affiliation{}
%\dedicated{}
\address[A]{Department of Statistics\\
London School of Economics \\
Columbia House\\
Houghton Street\\
London WC2A 2AE\\
United Kingdom\\
\printead{e1}\\
\phantom{E-mail:\ }\printead*{e2}}
%\address[A]{\\\printead{e1}}
%\address[]{\\\printead{}}
\end{aug}

% HISTORY:
%
\received{\smonth{8} \syear{2014}}% Updated by VTEXPTS2LaTeX.exe,
%09.10.2015 08:47
%
\revised{\smonth{7} \syear{2015}}% Updated by VTEXPTS2LaTeX.exe,
%09.10.2015 08:47

% ABSTRACT
%
\begin{abstract}
This paper develops a new methodology for studying continuous-time Nash
equilibrium in a financial market with asymmetrically informed agents.
This approach allows us to lift the restriction of risk neutrality
imposed on market makers by the current literature. It turns out that,
when the market makers are risk averse, the optimal strategies of the
agents are solutions of a forward--backward system of partial and
stochastic differential equations. In particular, the price set by the
market makers solves a nonstandard ``quadratic'' backward stochastic
differential equation. The main result of the paper is the existence of
a Markovian solution to this forward--backward system on an arbitrary
time interval, which is obtained via a fixed-point argument on the
space of absolutely continuous distribution functions. Moreover, the
equilibrium obtained in this paper is able to explain several stylized
facts which are not captured by the current asymmetric information models.
\end{abstract}

% KEYWORDS
% Pirmas kwd is didziosios raides
%
\begin{keyword}[class=AMS]
\kwd[Primary ]{60H30}
\kwd{60J60}
\kwd[; secondary ]{91B44}
\end{keyword}

\begin{keyword}
\kwd{Kyle model with risk averse market makers}
\kwd{Bertrand competition}
\kwd{forward--backward stochastic and partial differential equations}
\kwd{Markov bridges}
\end{keyword}
%
%\begin{keyword}[class=AMS]
%\kwd[Primary ]{}
%\kwd{}
%\kwd[; secondary ]{}
%\end{keyword}
%\begin{keyword}
%\kwd{}
%\end{keyword}
\end{frontmatter}

%s1 #&#
\section{Introduction}\label{sec1}
In this paper, we address the long-standing open problem\footnote{This
problem was posed by Subrahmanyam in \cite{sub}.} of existence of an
equilibrium in a financial market with asymmetrically informed traders
and risk averse market makers in continuous-time with finite horizon.
In such a market, the price of the traded asset is an equilibrium
outcome of a game between the market makers and an informed trader who
possesses superior information. Both market makers and the informed
trader choose their controls adapted to their filtrations. We assume
that the market makers obtain their information through their
interactions with the traders and have the obligation to absorb the
total demand for the asset. Therefore, their filtration is the one
generated by the total demand process, $Y$. The informed trader, on the
other hand, has the filtration jointly generated by the market prices
and her private information. In this game the market makers' control is
the price, $S$, while the control of the informed trader is her trading
strategy, $X$. Thus, the equilibrium price should satisfy the following
conditions: (i) the informed trader's optimisation problem has a
solution, and (ii) given this solution, the price $S$ fulfils the market
makers' objectives.\footnote{The focus of this paper is the equilibrium
between the market makers and the strategic informed trader as well as
the resulting price. We do not study in depth the interaction among the
market makers faced with a given demand process. The reader is referred
to the recent manuscript of Bank and Kramkov \cite{BK} for an in-depth
analysis of this interaction.}

The study of this game goes back to \cite{Kyle}, which is the canonical
model in market microstructure theory for the analysis of strategic
trading in the presence of private information (see \cite{Back,Brunnermeier}
and \cite{Ohara} for a review of Kyle's model as well as
a discussion of its relationship with other market microstructure
models). Various extensions of the original model have been studied in
the literature; see, among others,
\cite{B,bcw,BP,baruch,caldentey,cho,corcuera,dufresne} and \cite{HigaOrtiz}.

The original model and all these extensions assume that the market
makers are risk-neutral and compete in a \emph{Bertrand fashion} for the
total demand (see Section~12.C of \cite{M-C} for the definition). This
means that, in equilibrium, the utility of any market maker is a
martingale. Since the utility is linear, this in turn implies that the
optimal strategy for the market makers is to set the price to be the
conditional expectation of the fundamental value of the asset given
their filtration. In particular, in these models there is always a
unique price satisfying the objective of the market makers for any
control of the informed trader. Furthermore, the martingale property of
the price results in the optimal strategy of the informed trader being
\emph{inconspicuous} in the equilibrium; that is, the law of $Y$ in its
own filtration is the same as that of $Y-X$ in its own filtration.

Whereas the risk-neutrality of the market makers makes the model
tractable, it is not consistent with the observed market behaviour.
Indeed, there is a vast empirical evidence that the market makers are
risk averse and exercise their control in a way that total demand mean
reverts around a target level at a speed determined by their risk
aversion (see \cite{HS97} and \cite{MS93} for New York Stock Exchange,
\cite{HNV98} for London Stock Exchange, \cite{BR05} for Foreign
Exchange; for a survey of related literature and results, see Sections
1.2 and 1.3 in \cite{BGS05}).

%risk averse market makers do not like large fluctuations in their
%holdings and, therefore, will use their price setting power to avoid
%them.

Although relaxing the assumption of market makers' risk neutrality is
natural and has been prompted by the empirical evidence, there has been
only one attempt in the literature to investigate the effect of such an
extension. Subrahmanyam in \cite{sub} considered a one-period model
where market makers with identical exponential utilities set the price
that makes their utilities martingales. This assumption is the direct
analogue of the original Kyle model discussed above in the context of
risk averse market makers. The tractability of the model considered in
\cite{sub} relies on the fact that in a one-period setting there exists
an optimal response for the market makers for any strategy of the
insider. However, the existence of such responses is uncertain in a
multi-period setting. Indeed, Subrahmanyam noted that an extension of
his model to a multi-period setting is not possible due to the
strategic behaviour of the agents.

The aforementioned difficulty with the existence of an optimal response
for the market makers persists in continuous time. More precisely,
given a trading strategy of the informed trader, the optimal response
of the market makers is found via solving the backward stochastic
differential equation (BSDE)
%
%e1.1 #&#
%e1.2 #&#
\begin{eqnarray}
dS_t &=& Z_t \,d\beta_t -
\frac{c}{2}Y_t Z_t^2 \,dt, \label
{eq:s1:bsde}
\\
\exp(c Y_1 S_1 )&=&\bbE\bigl[\exp(c Y_1 V )
|\cF ^Y_1 \bigr], \label{eq:s1:bsdet}
\end{eqnarray}
where $c>0$ is a constant, $V$ is a bounded random variable
representing the fundamental value of the asset, $Y$ is a given total
demand process and $\beta$ is a Brownian motion with respect to $\cF^Y$
-- the filtration of the market makers generated by $Y$. A solution to
this BSDE is a pair $(Z,S)$ of $\cF^Y$-adapted processes satisfying
(\ref{eq:s1:bsde}) and (\ref{eq:s1:bsdet}). When this BSDE admits a
solution, $S$ is the price that makes the utilities of the market
makers martingales.

Although the terminal condition is unconventional, as $Y$ and $V$ are
given, the right-hand side of (\ref{eq:s1:bsdet}) is a fixed $\cF
^Y_1$-measurable random variable. Thus, we can rewrite the terminal
condition as $S_1=\xi$, which is bounded due to the boundedness of $V$.
The form of the driver, on the other hand, poses a real difficulty
since the process $Y$ multiplying $Z^2$ is in general unbounded. This
renders the system (\ref{eq:s1:bsde})--(\ref{eq:s1:bsdet}) outside the
realm of standard quadratic BSDEs.

The price response of the market makers is only one side of the
equilibrium. To characterise an equilibrium, we also need to find the
level of total demand, $Y$, implied by the informed agent's optimal
trading strategy. Consistent with the literature, we assume that the
total demand is driven by a Brownian motion and has a drift which is
determined by the informed trader. Hence, an equilibrium consists of
$(\alpha, S)$, where $\alpha$ is the optimal drift given $S$, and $S$
satisfies the forward--backward stochastic differential equation (FBSDE)
%
%e1.3 #&#
%e1.4 #&#
%e1.5 #&#
\begin{eqnarray}
dY_t&=&\,d\beta_t + \hat{\alpha}\bigl(t,(Y_s)_{s \leq t}
\bigr) \,dt, \label
{eq:s1:fbsdef}
\\
dS_t &=& Z_t \,d\beta_t -
\frac{c}{2}Y_t Z_t^2 \,dt, \label
{eq:s1:fbsdeb}
\\
\exp(c Y_1 S_1 )&=&\bbE\bigl[\exp(c Y_1 V )
|\cF ^Y_1 \bigr], \label{eq:s1:fbsdet}
\end{eqnarray}
where $\hat{\alpha}$ is the $\cF^Y$-optional projection of $\alpha
$. It
is well known that the existence of a solution for FBSDEs is quite
delicate even when the driver is globally Lipschitz and satisfies a
linear growth condition. Antonelli \cite{antonelli} showed the
existence and
uniqueness of a solution over a small time interval via a fixed-point
algorithm on a Banach space of processes. This result has been extended
by \cite{hamadene} and \cite{delarue} to arbitrary time intervals by
pasting solutions obtained for small time intervals. An alternative
technique for solving FBSDEs is the so-called \emph{four-step scheme}
introduced by \cite{maetal}, which requires strong smoothness on the
coefficients of the system and is based on the link between
quasi-linear partial differential equations. When the driver is
quadratic, the problem becomes more complicated and only few results
are available. Moreover, since available results originate from the
solvability of quadratic BSDEs, the standard assumption in the current
literature is that the driver is bounded by $k(1+z^2)$ for some
constant $k$ (see, e.g., \cite{imkelleretal}). However, as (\ref
{eq:s1:fbsdeb}) does not fit into the current paradigm of quadratic
BSDEs, these results are not applicable to our setting.

Despite these difficulties, we obtain a solution to this system with
$S_t=H(t,Y_t)$ for some smooth function $H$, when $\alpha$ is the
optimal drift of the informed trader given $S$. This solution provides
a Markovian equilibrium for the model that we consider. We show that in
this case the system (\ref{eq:s1:fbsdef})--(\ref{eq:s1:fbsdet})
transforms into
%
%
%e1.6 #&#
%e1.7 #&#
%e1.8 #&#
\begin{eqnarray}
H_t  + \frac{1}{2} H_{yy}&=&0, \label{eq:s1:NCH}
\\
dY_t &=&d\beta_t -\frac{c}{2}Y_t
H_y(t,Y_t)\,dt ,\label{eq:s1:NCY}
\\
V& \eid& H(1,Y_1), \label{eq:s1:NCF}
\end{eqnarray}
provided $Y$ has a smooth transition density, where the last equality
is an equality in distribution. This is still a forward--backward system
of a forward SDE and a backward PDE such that the terminal condition of
the PDE depends on the solution of the SDE, which in turn depends on
$H$. This coupling between the SDE and the PDE suggests a use of a
fixed-point algorithm.

Indeed, if we are given a continuous distribution for $Y_1$, (\ref
{eq:s1:NCF}) yields a function $H(1,y)$, which is increasing in $y$.
This allows us to obtain $H(t,y)$ via (\ref{eq:s1:NCH}), and $Y$ via
(\ref{eq:s1:NCY}). Hence, this procedure defines a mapping from the
space of distributions into itself. We show in Theorem~\ref{th:fp}, via
Schauder's fixed-point theorem, that this mapping has a fixed point
under the assumption that $V=f(\eta)$ for some increasing and bounded
$f$ satisfying some mild regularity conditions, and a standard normal
random variable $\eta$.

The validity of Schauder's fixed-point theorem in our setting relies
heavily on the properties of solutions of (\ref{eq:s1:NCY}) for any
given function $H$ satisfying (\ref{eq:s1:NCH}) with a bounded and
increasing terminal condition. These properties are explored in Lemmata
\ref{l:SD}--\ref{l:density}. In particular, we obtain a remarkable
connection between the laws of $Y_1$ and that of Brownian motion.
Namely, we prove that
\begin{eqnarray*}
\bbE\bigl[(Y_1-x)^+\bigr]&\geq& \bbE\bigl[\bigl(e^{-cC}B_{1}-x
\bigr)^+\bigr]>0 ,
\\
\bbE\bigl[(-x-Y_1)^+\bigr]&\geq&\bbE\bigl[\bigl(-x-e^{-cC}B_{1}
\bigr)^+\bigr]>0,
\end{eqnarray*}
for all $x>0$, where $C$ is a constant that depends only on the bound
on $H$. We also show that $Y$ has a smooth transition density.

The existence of solution to the system (\ref{eq:s1:NCH})--(\ref
{eq:s1:NCF}) ensures the existence of a Markovian solution for the
price process which makes the utilities of market makers martingales
once the drift of total demand, $Y$ has the form given in equation~(\ref
{eq:s1:NCY}). However, in order for such a drift to appear in
equilibrium, it should be optimal for the insider to choose a drift
whose $\cF^Y$-optional projection has this form.

To this end, we establish in Proposition~\ref{p:ioptimality} that the
sole criterion of optimality for the insider is that the strategy
fulfils the bridge condition $H(1, Y_1)=V$. Thus, if Markovian
equilibrium exists, the equilibrium pair $(H,Y)$ solves the system
(\ref
{eq:s1:NCH})--(\ref{eq:s1:NCF}) \emph{and} satisfies $H(1,Y_1)=V$. The
existence of such a pair is precisely the result of Theorem~\ref
{t:insider}, which allows us to establish the existence of the
equilibrium in Theorem~\ref{t:eq}.

The paper is structured as follows. Section~\ref{model} describes the
model we consider while Section~\ref{s:eq} is devoted to the (formal)
derivation of the system (\ref{eq:s1:NCH})--(\ref{eq:s1:NCF}) and
characterisation of the optimal strategy of the informed trader.
Section~\ref{s:main} establishes the existence of solution to the
system (\ref{eq:s1:NCH})--(\ref{eq:s1:NCF}) and Section~\ref{s:eqm}
proves the existence of the equilibrium. In Section~\ref{s:conclusion},
we discuss the impact of risk aversion on the market behaviour in the
equilibrium and explore the connections to the empirical literature.

%s2 #&#
\section{Market structure} \label{model}
Let $(\Omega, \cF, (\cF_t)_{t \in[0,1]}, \bbP)$ be a filtered
probability space satisfying the usual conditions of right continuity
and $\bbP$-completeness. We suppose that $\cF_0$ is not trivial and
there exists an $\cF_0$-measurable standard normal random variable,
$\eta$. Moreover, the filtered probability space also supports a
standard Brownian motion, $B$, with $B_0=0$, and thus, $B$ is
independent of $\eta$. We define $V:=f(\eta)$ for some bounded and
strictly increasing function $f$ with a continuous derivative.

As all the randomness in our model will depend only on $V$ and $B$, we
shall take $\cF=\sigma(\tilde{\mathcal{N}}, \tilde{\cF})$, where
$\tilde
{\cF}$ is the minimal $\sigma$-field with respect to which $V$ and
$(B_t)_{t \in[0,1]}$ are measurable and $ \tilde{\mathcal{N}}=\{E: E
\subset F \mbox{ for some } F \in\tilde{\cF} \mbox{ with } \bbP
(F)=0\}
$. Moreover, in view of the independence of $V$ and $B$, we may assume
the existence of a family of probability measures, $(\bbP^v)$ such that
the disintegration formula
\[
\bbP(E)=\int_{f(\bbR)} \bbP^v(E)\bbP(V \in dv)
\]
holds for all $E \in\cF$, and for all $v \in f(\bbR)$ the measure
$\bbP
^v$ satisfies $\bbP^v(E)=\bbP(E|V=v)$. The existence of such a family
is easily justified when we consider $\Om=f(\bbR) \times C([0,1],\bbR
)$, where $C([0,1],\bbR)$ is the space of real valued continuous
functions on $[0,1]$.

We consider a market in which the risk free interest rate is set to $0$
and a single risky asset is traded. The fundamental value of this asset
equals $V$, which will be announced at time $t=1$.

There are three types of agents that interact in this market:
\begin{longlist}[(iii)]
\item[(i)] Liquidity traders who trade for reasons exogenous to the
model and whose cumulative demand at time $t$ is given by $\sigma B_t$
for some constant $\sigma>0$.
\item[(ii)] A single informed trader, who knows $V$ from time $t=0$
onward, and is risk neutral. We will call the informed trader
\emph{insider} in what follows and denote her cumulative demand at
time $t$
by $X_t$. The filtration of the insider, $\cF^I$, is generated by
observing the price of the risky asset and $V$. Thus, an insider who
has the information that $V=v$ possesses the minimal right continuous
filtration generated by $V$ and the price process, and completed with
the null sets of $\bbP^v$.
\item[(iii)] Market makers observe only the net demand of the risky
asset, $Y=X+\sigma B$, thus, their filtration, $\cF^M$, is the minimal
right-continuous filtration generated by $Y$ and completed with $\bbP
$-null sets. The number of market makers is assumed to be $N \geq2$.

We also assume that the market makers have identical preferences
described by the common utility function, $U(x)=-e^{-\rho x}$, and
compete in a Bertrand fashion for the net demand of the risky asset. In
case of several market makers quoting the same winning price, we adopt
the convention that the total order is equally split among them.

Similar to \cite{B}, we assume that the market makers set the price of
risky security, $S$, as $S_t=H(t,Y_t)$ for some function $H$.
\end{longlist}
To understand the subtlety of the equilibrium derived later, it is
important to observe that an insider who is given the information that
$V=v$ has the probability measure $\bbP^v$ on $(\Om, \cF)$ while the
probability measure of the market makers is given by $\bbP$, and these
measures are singular with respect to each other as $\bbP^v(V=v)=1$,
whereas $\bbP(V=v)=0$ in our settings.

We now define admissibility of functions $H$ for the market makers
(which will be called \emph{pricing rule} in what follows) and
admissibility of the trading strategy of the insider. The conditions we
impose are standard in the literature and were first introduced in
\cite
{B}. The integrability conditions (\ref{a:Hintegrability}) and (\ref
{a:Xintegrability}) prevent the insider from following doubling
strategies (see \cite{B} for the discussion). The absolute continuity
of insider's strategies is without any loss of generality since
strategies with a martingale component and/or jumps are strictly
suboptimal as shown in \cite{B}.

%
%de2.1 #&#
\begin{definition} A function $H: \bbR_+\times\bbR\mapsto\bbR$ is a
\emph{pricing rule} if $H \in C^{1,2}$, strictly increasing in $y$ and
satisfies
%
%e2.1 #&#
\begin{equation}
\label{a:Hintegrability} \bbE H^2(1,\sigma B_1)<\infty\quad\mbox{and}\quad
\bbE\int_0^1 H^2(t,\sigma
B_t) \,dt < \infty.
\end{equation}
The class of such functions is denoted with $\cH$.
\end{definition}

Note that since any pricing rule is strictly monotone, $B$ is adapted
to $\cF^I$. The admissible strategies for the insider is defined in the
following.

%de2.2 #&#
\begin{definition} \label{d:admissible} An insider strategy, $X$, is
\emph{admissible} for a given pricing rule, $H$, if $X_t=\int_0^t\alpha
_s \,ds$ for some $\cF^I$-progressively measurable $\alpha$ such that,
for all $v \in f(\bbR)$, we have $\bbP^v(\int_0^1|\alpha_s| \,
ds<\infty)=1$,
%
%e2.2 #&#
\begin{equation}
\label{a:Xintegrability} \bbE^v\int_0^1
H^2(t,X_t+\sigma B_t) \,dt < \infty,
\end{equation}
and $\bbE^v [\min\{0,W^X_1\} ] >-\infty$, where $W^X_1$ is the
terminal wealth of insider given by
%
%e2.3 #&#
\begin{equation}\qquad
\label{d:iw} W_1^X:=\int_0^1
X_s \,dH(s,Y_s) + X_1\bigl(V-H(1,Y_1)
\bigr)=\int_0^1 \bigl(V-H(s,Y_s)
\bigr)\,dX_s.
\end{equation}
The class of admissible strategies for a given pricing rule $H$ will be
denoted by $\cA(H)$.
\end{definition}

Observe that for any $X$ of finite variation $W_1^X$ is well defined
since $V-H(s,X_s)$ is a continuous process for any pricing rule, $\bbP^v$-a.s.

The first term in (\ref{d:iw}) corresponds to continuous trading in the
risky asset, while the second term exists due to a potential
discontinuity in the asset price when the value becomes public
knowledge at time $t=1$. The second expression for the wealth follows
from integration by parts.

Given the definition of a pricing rule and admissible trading
strategies, we can now define an equilibrium as follows.
%

%de2.3 #&#
\begin{definition} A pair $(H^{\ast}, X^{\ast})$ is an equilibrium if
$H^{\ast} \in\cH$, $X^{\ast} \in\cA(H^{\ast})$, and
\begin{longlist}[(ii)]
\item[(i)] given $H^{\ast}$, the insider's strategy $X^{\ast}$ solves
her optimisation problem:
\[
\bbE^v \bigl[W^{X^{\ast}}_1 \bigr] = \sup
_{X \in\cA(H^{\ast})} \bbE^{v} \bigl[W^{X}_1
\bigr]\qquad \forall v \in f(\bbR).
\]
\item[(ii)] Given $X^{\ast}$, the pricing rule $H^{\ast}$ is such that
the market makers' wealth satisfies zero-utility gain condition, that
is, $U (G )$ is a $(\cF^M,\bbP)$-martingale, where
%
%e2.4 #&#
\begin{equation}
\label{e:mmG} G_t:=-\frac{1}{N}\int_0^t
Y^{\ast}_s \,dH^{\ast}\bigl(s,Y^{\ast}_s
\bigr) +\chf _{t=1}\frac{Y^{\ast}_1}{N}\bigl(H^{\ast}
\bigl(Y^{\ast}_1,1\bigr)-V\bigr).
\end{equation}
\end{longlist}
\end{definition}

The above is the formulation of a Markovian Nash equilibrium in our
model. The condition for the optimality of insider's strategy is a
straightforward description of the best response of the insider for a
given pricing rule. The market makers' optimality condition follows the
tradition of Kyle models where each market makers' utility remains a
martingale due to the Bertrand competition among them. Indeed, suppose
that one of the market makers, say $\mbox{MM}_i$, decides to deviate at
some time $t$ from this pricing rule by, for example, selling at a
higher price than $H$ would suggest in order to achieve a positive
utility gain. However, the other market makers could then offer to sell
at a slightly lower price which would still allow them to make a
positive utility gain. Moreover, as this lower price is more favourable
to the traders, no one will trade with $\mbox{MM}_i$ eliminating any
opportunities for a utility gain. Deviation from the zero-utility gain
condition by buying at a lower price is also suboptimal for a similar
reason. Clearly, buying (resp., selling) at a higher (resp., lower)
price is suboptimal since it leads to a loss in the utility. Thus, a
pricing rule satisfying the zero-utility gain condition is the best
response of the market makers. The zero-utility gain condition is also
a direct continuous-time analogue of the concept of \emph{autarky
utility} defining the equilibrium in the one-period Kyle model of \cite
{sub} studying the effects of the risk aversion of market makers on
equilibrium. Recall that the market makers are identical by assumption
and, therefore, they offer the same price quotes in equilibrium and the
order is split equally among them due to our order splitting convention
when there are more than one winning quote.
%%%%%%%%%%%%%%%%%%%%%%%%%%%%%%%%%%%%%%%%%%%%%%%%%%%%%%%%%%%%%%%%%%%%%%%%
%s3 #&#
\section{Characterisation of equilibrium} \label{s:eq}
In this section, we show that a Markovian equilibrium of this game is
described by a forward--backward system of stochastic and partial
differential equations given by (\ref{eq:s1:NCH})--(\ref{eq:s1:NCF}) by
first studying the optimal response of the market makers for a given
strategy of the insider, and then characterising the profit maximising
strategies for the insider. The heuristic arguments below which are
used to characterise the equilibrium will be made rigorous in
subsequent sections.

Suppose that $X$ is an admissible trading strategy of the insider so
that $Y$ in its own filtration satisfies
\[
dY_t= \sigma\,dB^Y_t +\hat{
\alpha}_t \,dt,
\]
where $B^Y$ is an $\cF^M$-Brownian motion and $\hat{\alpha}$ is the
$\cF
^M$-optional projection of $\alpha$. The best response of the market
makers is to choose a price, $S$, that will satisfy the zero-utility
gain condition. Let price $S$ follow
\[
dS_t=Z_t \,dB^Y_t +
\mu_t \,dt,
\]
for some predictable process $Z$ and an optional process $\mu$ that are
to be determined by the market makers. As there is a potential
discrepancy between $S_1$ and $V$, there is a possibility of a jump in
the market makers' wealth at time $1$. More precisely,
\[
\Delta G_1= \frac{Y_1}{N}(S_1-V).
\]
However, the zero-utility gain condition implies
\[
1=\bbE \biggl[\exp \biggl(-\frac{\rho Y_1}{N}(S_1-V) \biggr) \Big|\cF
^M_1 \biggr],
\]
which is equivalent to
%
%e3.1 #&#
\begin{equation}
\label{e:bsdet} \bbE\biggl[\exp\biggl(\frac{\rho Y_1}{N}V \biggr)\Big |
\cF^M_1 \biggr]=\exp \biggl(\frac{\rho Y_1}{N}S_1
\biggr).
\end{equation}
On the other hand, if we compute the dynamics of $U(G)$ for $t<1$ by
It\^{o}'s formula, we obtain
\[
dU(G_t)=U(G_t)\frac{\rho}{N}Y_t \biggl\{
\sigma_t \,dB^Y_t + \biggl(\mu_t +
\frac{\rho}{2N} Y_t \sigma_t^2 \biggr)\,dt
\biggr\}.
\]
Reiterating the zero-utility gain condition for $t<1$ shows that we
must have
\[
\mu_t=-\frac{\rho}{2 N}Y_t Z_t^2.
\]
Therefore, the zero-utility gain condition stipulates that the price
$S$ follows
%
%e3.2 #&#
\begin{equation}
\label{e:bsde} dS_t =Z_t \,dB^Y_t
-\frac{\rho}{2 N}Y_t Z_t^2 \,dt,
\end{equation}
and the market makers' problem is to find $(Z, S)$ to solve (\ref
{e:bsde}) with the terminal condition (\ref{e:bsdet}) given the total
demand process $Y$.

The BSDE in (\ref{e:bsde}) is reminiscent of the quadratic BSDEs, which
have been studied extensively, and the connection of which to problems
arising in mathematical finance is well established (see, e.g., \cite
{kobylanski,briandhu,BE-K} and the references therein).
The essential deviation of (\ref{e:bsde}) from the BSDEs considered in
these papers is that the coefficient of $Z_t^2$ in (\ref{e:bsde}) is
$\frac{\rho}{2 N}Y_t$, which is in general unbounded. This makes the
direct application of the results contained in the current literature
for quadratic BSDEs to (\ref{e:bsde}) impossible.

However, if we turn to a Markovian equilibrium, that is, consider
$S_t=H(t,Y_t)$, it is natural to expect that in equilibrium $\hat
{\alpha
}_t=\hat{\alpha}_t(t,Y_t, S_t, Z_t)$ for some deterministic function
$\hat{\alpha}$ so that
%
%e3.3 #&#
\begin{equation}
\label{e:bsdef} dY_t= \sigma\,dB^Y_t +\hat{
\alpha}(t,Y_t, S_t, Z_t) \,dt.
\end{equation}
Thus, if a Markovian equilibrium can be attained it will provide a
Markovian solution to the FBSDE defined by (\ref{e:bsdet})--(\ref
{e:bsdef}), where $\hat{\alpha}$ is the optimal drift chosen by the insider.
% The solution of the general FBSDE defined by (\ref{e:bsdef})-(
%\ref{e:bsde}) is an interesting open problem, which will be relevant,
%e.g, when the insider cannot achieve $P_1=V$ by trading due to certain
%constraints on her trading strategies.

We now turn to the optimisation problem for the insider when
$S_t=H(t,Y_t)$ for an admissible pricing rule $H$. Observe that from
the point of view of the insider the total demand process follows
\[
dY_t=\sigma\,dB_t + \alpha_t \,dt,
\]
for a given insider's strategy $X_t=\int_0^t\alpha_s \,ds$. And the value
function, $\Psi$, can be defined as
\[
\Psi(t,y)=\sup_{X \in\cA(H)}\bbE^{v} \biggl[\int
_t^1 \bigl(V-H(s,Y_s) \bigr)
\alpha_s \,ds \Big|Y_t=y \biggr].
\]
Then, a formal application of the dynamic programming principle leads
to the HJB equation
\[
\Psi_t +\frac{\sigma^2}{2}\Psi_{yy} +\sup
_{\alpha} \bigl\{\alpha(\Psi_y + V- H ) \bigr\}=0.
\]
Since the term to be maximised is linear in $\alpha$, the only way to
ensure the finiteness of solution is to set
\[
\Psi_y= H-V,
\]
which yields $\Psi_t +\frac{\sigma^2}{2}\Psi_{yy}=0$. Then, by
straightforward calculations we see that $H$ must satisfy a backward
heat equation
\[
H_t +\frac{\sigma^2}{2}H_{yy}=0,
\]
and, therefore, It\^{o}'s formula will yield that $S$ should satisfy
\[
dS_t=\sigma H_y(t,Y_t)\,dY_t.
\]
Combining this with (\ref{e:bsde}) and (\ref{e:bsdef}) implies
\[
\frac{z}{\sigma} \hat{\alpha}(t,y,s,z)=-\frac{\rho}{2 N}y z^2,
\]
that is,
%
%e3.4 #&#
\begin{equation}
\label{e:ahatopt} \hat{\alpha}(t,y,s,z)=-\frac{\rho\sigma}{2 N}y z
\end{equation}
as soon as we note that $z=\sigma H_y (t,y)$ by the choice of $S$.

The above form of $\hat{\alpha}$ is necessary in order for the market
makers to quote a Markovian pricing rule. However, in order for such
$\hat{\alpha}$ to appear in equilibrium, it should be optimal for the
insider to choose a drift whose $\cF^M$-optional projection has this
form. In Proposition~\ref{p:ioptimality}, we will show that the sole
criterion of optimality for the insider is that the strategy fulfils
the bridge condition $H(1, Y_1)=V$. Thus, if a Markovian equilibrium exists,
%
%e3.5 #&#
\begin{equation}
\label{e:s3:Yineq} dY_t= \sigma\,dB^Y_t -
\frac{\sigma^2 \rho}{2 N}Y_t H_y(t,Y_t),
\end{equation}
and $H$ solves the backward heat equation above and satisfies $H(1,Y_1)=V$.

As we show in Sections \ref{s:main} and \ref{s:eq} a pair $(H,Y)$
satisfying the above conditions exists for some admissible insider
trading strategy and that it indeed constitutes an equilibrium. In
order to see that this equilibrium is indeed feasible, suppose that we
have a pair $(H,Y)$ which solves the following system of equations:
%
%
%e3.6 #&#
%e3.7 #&#
%e3.8 #&#
\begin{eqnarray}
H_t  + \frac{1}{2}\sigma^2 H_{yy}&=&0,
\label{eq:NCH}
\\
dY_t &=&\sigma\,d\beta_t -\frac{\sigma^2 \rho}{2 N}Y_t
H_y(t,Y_t)\,dt, \label
{eq:NCY}
\\
V& \eid& H(1,Y_1), \label{eq:NCF}
\end{eqnarray}
with $Y_0=0$ where $\beta$ is a Brownian motion on some given
probability space and $Y$ is understood to be a strong solution of the
forward SDE. Further assume that the transition probability of $Y$
possesses a smooth density, $p$. Then the theory of filtration
enlargements gives us (see Theorem~1.6 in \cite{my}) that $Y$ solves
the SDE
%
%e3.9 #&#
\begin{equation}
\label{e:s3:Yenl} dY_t= \sigma\,d\tilde{\beta}_t + \biggl\{
\sigma^2\frac{p_y}{p}(t, Y_t; 1, Y_1)-
\frac{\sigma^2 \rho}{2 N}Y_t H_y(t,Y_t) \biggr\}\,dt,
\end{equation}
where $\tilde{\beta}$ is a Brownian motion with respect to the natural
filtration of $Y$ initially enlarged with the random variable $Y_1$
and, in particular, independent of $Y_1$. Thus, if $\tilde{V}$ is a
random variable with the same distribution as $V$ and independent of
$\tilde{\beta}$, we can replace $Y_1$ with $H^{-1}(1,\tilde{V})$ in
(\ref{e:s3:Yenl}) and obtain the SDE
\[
d\tilde{Y}_t= \sigma\,d\tilde{\beta}_t + \biggl\{
\sigma^2\frac{p_y}{p} \bigl(t, \tilde{Y}_t; 1,
H^{-1}(1,\tilde{V}) \bigr)-\frac{\sigma^2 \rho}{2 N}\tilde{Y}_t
H_y(t,\tilde{Y}_t) \biggr\}\,dt.
\]
Now, suppose that the solutions of this SDE are unique in law. Then
$\tilde{Y}$ will have the same law as $Y$, which yields in particular
that $\tilde{Y}_1=H^{-1}(1,\tilde{V})$ and in its own filtration
$\tilde
{Y}$ follows
\[
d\tilde{Y}_t =\sigma\,dB^{\tilde{Y}}_t -
\frac{\sigma^2 \rho}{2 N}\tilde{Y}_t H_y(t,\tilde{Y}_t)
\,dt,
\]
for some Brownian motion $B^{\tilde{Y}}$.

The above discussion makes it clear what the optimal strategy of the
insider should be given $H$. Since $V$ is independent of $B$, the
optimal number of shares of the risky asset held by the insider at time
$t$ equals
\[
\int_0^t \biggl\{ \sigma^2
\frac{p_y}{p} \bigl(s, {Y}_s; 1, H^{-1}(1,{V}) \bigr)-
\frac
{\sigma^2 \rho}{2 N}{Y}_s H_y(s,{Y}_s) \biggr
\}\,ds.
\]
This ensures that $Y$ follows (\ref{e:s3:Yineq}) in its own filtration
and $H(1,Y_1)=V$ achieving the optimality conditions for the insider as
well as those for the market makers.

These considerations imply that the question of existence of the
equilibrium can be reduced to the problem of existence of a solution to
the system (\ref{eq:NCH})--(\ref{eq:NCF}) with process $Y$ admitting a
smooth transition density. Despite the apparent simplicity, the
existence of a solution to this system is far from being a trivial
matter. Indeed, in order to determine $H$ via the basic PDE in (\ref
{eq:NCH}), we first need to know its boundary condition. However, the
boundary condition for $H$, (\ref{eq:NCF}), requires the knowledge of
the distribution of $Y_1$ which can only be determined if we know $H$.
Thus, this problem is appropriate for the employment of a fixed-point
theorem which indeed yields the existence of the solution as
demonstrated in the next section.

%It seems from (\ref{eq:NCF}) that fixing the boundary condition for
%$H$ is obvious, but (\ref{eq:NCF}) requires the knowledge of the
%distribution of $Y_1$ which can only be determined if we know $H$. We
%will establish the existence of a solution to this system by
%considering a fixed point algorithm.

We end this section by proving the optimality criteria for the insider
that we used in order to establish the above system.
%

%pr3.1 #&#
\begin{proposition} \label{p:ioptimality} Suppose $H$ is a pricing rule
satisfying
%
%e3.10 #&#
\begin{equation}
\label{eq:Hheat} H_t + \tfrac{1}{2}\sigma^2
H_{yy}=0.
\end{equation}
If $X_t=\int_0^t\alpha_s \,ds$ for some $\cF^I$-progressively measurable
$\alpha$ such that, for all $v \in f(\bbR)$, we have $\bbP^v(\int_0^1|\alpha_s| \,ds<\infty)=1$,
%
%e3.11 #&#
\begin{equation}
\label{eq:anona} \bbE^v \biggl(\int_0^1
H^2(t,X_t+\sigma B_t) \,dt \biggr) < \infty
\end{equation}
and
%
%e3.12 #&#
\begin{equation}
\label{eq:iopt} H(1, X_1+ Z_1)=V,\qquad \bbP^v\mbox{-a.s.},
\end{equation}
then $X \in\cA(H)$ and it is an optimal strategy for the insider.
\end{proposition}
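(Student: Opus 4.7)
The plan is to introduce, under each conditional measure $\bbP^v$, a potential $\Phi$ depending on $H$ and $v$ whose It\^o expansion converts the insider's wealth into a deterministic constant minus a nonnegative ``bridge penalty'' plus a true martingale. This packaging makes both the upper bound for $\bbE^v[W_1^X]$ and its attainment under the bridge condition (\ref{eq:iopt}) transparent.

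\textbf{Construction of $\Phi$.} Fix $v\in f(\bbR)$ and work on $(\Om,\cF,\bbP^v)$, so that $V\equiv v$. Strict monotonicity of $H(1,\cdot)$ permits setting $y^\ast:=H^{-1}(1,v)$ and
\[
\Phi(t,y)\;:=\;\int_{y^\ast}^{y}\bigl(H(t,u)-v\bigr)\,du.
\]
Then $\Phi_y(t,y)=H(t,y)-v$, $\Phi_{yy}(t,y)=H_y(t,y)$, and integrating the backward heat equation (\ref{eq:Hheat}) from $y^\ast$ to $y$ yields $\Phi_t+\tfrac{\sigma^{2}}{2}\Phi_{yy}=\tfrac{\sigma^{2}}{2}H_y(t,y^\ast)$, a deterministic function of $t$ alone. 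Monotonicity of $H(1,\cdot)$ together with the choice of $y^\ast$ gives $\Phi(1,y)\ge 0$ for every $y$, with equality iff $H(1,y)=v$.

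\textbf{Decomposition.} Applying It\^o's formula to $\Phi(t,Y_t)$ with $dY_t=\alpha_t\,dt+\sigma\,dB_t$ and reorganising the drift via $\Phi_y=H-v$ gives the pathwise identity
\[
W_1^{X}+\Phi(1,Y_1)\;=\;C(v)+M_1,\qquad C(v):=\Phi(0,0)+\!\int_0^{1}\!\tfrac{\sigma^{2}}{2}H_y(t,y^\ast)\,dt,
\]
where $M_t:=\sigma\int_0^t(H(s,Y_s)-v)\,dB_s$. The integrability hypothesis (\ref{eq:anona}) and the boundedness of $v$ imply $\bbE^v\!\int_0^1(H(s,Y_s)-v)^{2}\,ds<\infty$, so $M$ is a genuine $L^{2}(\bbP^v)$-martingale with $\bbE^v[M_1]=0$; meanwhile $C(v)$ is finite because $H\in C^{1,2}$ makes $t\mapsto H_y(t,y^\ast)$ continuous on $[0,1]$.

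\textbf{Admissibility and optimality.} For the $X$ in the statement the bridge condition (\ref{eq:iopt}) forces $\Phi(1,Y_1)=0$, $\bbP^v$-a.s., so $W_1^{X}=C(v)+M_1\in L^{2}(\bbP^v)$; in particular $\bbE^v[\min\{0,W_1^{X}\}]>-\infty$, giving $X\in\cA(H)$, and $\bbE^v[W_1^{X}]=C(v)$. For any competing $X'\in\cA(H)$, the same decomposition with $Y':=X'+\sigma B$ shows $(W_1^{X'})^+\le C(v)+|M_1'|\in L^{1}$; combined with admissibility (which makes $(W_1^{X'})^-\in L^{1}$), the identity gives $\bbE^v[W_1^{X'}]=C(v)-\bbE^v[\Phi(1,Y_1')]\le C(v)$. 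Hence $X$ is optimal. The delicate point is the true-martingale property of $M$: without (\ref{eq:anona}) only a local martingale is available, and the identity above would degenerate into an inequality via Fatou, which is precisely why the integrability hypothesis is listed explicitly in the statement.
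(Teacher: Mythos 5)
Your argument is correct and is essentially the paper's own verification proof: both introduce a potential with $\Phi_y=H-v$ whose It\^o expansion writes $W_1^X$ as a constant plus an $L^2$-martingale (a true martingale by (\ref{eq:anona})) minus the nonnegative penalty $\int_{H^{-1}(1,v)}^{Y_1}(H(1,u)-v)\,du$, which vanishes exactly under the bridge condition (\ref{eq:iopt}). The only difference is cosmetic: the paper anchors the integral at the moving root $\xi(t)$ of $H(t,\cdot)=V$ and adds a deterministic integral to make the PDE for $\Psi$ homogeneous, whereas you anchor at the fixed point $y^\ast=H^{-1}(1,v)$ and carry the harmless deterministic source $\tfrac{\sigma^2}{2}H_y(t,y^\ast)$ instead.
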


\begin{pf}
We adapt the arguments in \cite{B} and \cite{Wu} to our case. Consider
the function
%
%
%e3.13 #&#
\begin{equation}
\label{def:generalG_a} \Psi(t,y):=\int_{\xi(t)}^y \bigl
\{H(t,u)-V \bigr\} \,du +\frac
{1}{2}\sigma^2 \int
_t^1 H_y \bigl(s,\xi(s) \bigr)\,ds,
\end{equation}
where $\xi(t)$ is the unique solution of $H(t,\xi(t))=V$.
Direct calculations show
%
%
%e3.14 #&#
\begin{equation}
\label{eq:G_a_x} \Psi_y(t,y)=H(t,y)-V
\end{equation}
and
%
%
%e3.15 #&#
\[
\Psi_t+\frac{\sigma^2}{2}\Psi_{yy}=0.
\]
Therefore, from (\ref{eq:G_a_x}) and It\^{o}'s formula it follows that
%
%
%e3.16 #&#
\begin{eqnarray}
\label{eq:opt_str_G_a} \Psi(1,Y_1)-\Psi(0,0)&=&\int_0^1
\bigl\{H(t,Y_t)-V \bigr\} \,dY_t
\nonumber
\\[-8pt]
\\[-8pt]
\nonumber
&=&-W^X_1
+ \int_0^1 \bigl\{ H(t,Y_t)-V
\bigr\} \sigma\,dB_t
\end{eqnarray}
for any $X$ such that $X_t=\int_0^t\alpha_s \,ds$ with $\bbP^v(\int_0^1|\alpha_s| \,ds<\infty)=1$.
Using (\ref{eq:opt_str_G_a}) and admissibility properties of $X$ (see
Definition~\ref{d:admissible}), insider's optimisation problem becomes
%
%
%e3.17 #&#
%e3.18 #&#
\begin{eqnarray}
\sup_{X\in{\cal{A}}(H)}\bbE^{v} \bigl[W^{X}_1
\bigr]&=& \sup_{X\in{\cal
{A}}(H)}\bbE^{v} \biggl[\int
_0 ^1 \bigl(V- H(t,Y_t) \bigr)
\,dX_t \biggr]
\\
&=& \bbE^{v} \bigl[\Psi(0,0) \bigr]-\inf_{X\in{\cal{A}}(H)}
\bbE^{v} \bigl[\Psi(1,Y_1) \bigr],
\end{eqnarray}
where the last equality is due to (\ref{a:Xintegrability}).

Since $\Psi(1,Y_1)=\int_{\xi(1)}^{Y_1} \{H(1,u)-V\}\,du$ is strictly
positive unless $Y_1=\xi(1)$ as $H(1,y)$ is strictly increasing, the
conclusion will follow as soon as $X$ is shown to be admissible. In
view of (\ref{eq:opt_str_G_a}),
\[
W^X_1= \Psi(0,0)+ \int_0^1
\bigl\{H(t,Y_t)-V \bigr\} \sigma\,dB_t,
\]
and, therefore, the admissibility of $X$ follows from (\ref{eq:anona}).
\end{pf}

%s4 #&#
\section{The main result and its proof}\label{s:main}
In this section, we state and prove the main result of this paper that
establishes the existence of a solution to the system given by (\ref
{eq:NCH})--(\ref{eq:NCF}).
%%%%%%%%%%%%%%%%%%%%%%%%%%%%%%%%%%%%%%%%%%
%

%th4.1 #&#
\begin{theorem} \label{th:fp} There is a pair $(H,Y)$ that solves the
system of equations (\ref{eq:NCH})--(\ref{eq:NCF}). Moreover, $0<
H_y(t,y)\leq C \frac{1}{\sqrt{1-t}}$ for all $(t,y) \in[0,1) \times
\bbR$ and for some constant $C$. Furthermore, $Y$ is the unique strong
solution of (\ref{eq:NCY}) and admits a regular transition
density,\footnote{See the last paragraph on page 76 of \cite{MarRos} for
a definition.} $p(s,y;t,z)$, for all $0\leq s\leq t\leq1$ and $(y,z)
\in\bbR^2$ such that, for any fixed $(t,z)$, $p(s,y;t,z)>0$ on
$[0,t)\times\bbR$ and is $C^{1,2}([0,t) \times\bbR)$.
\end{theorem}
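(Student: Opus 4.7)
The plan is to apply Schauder's fixed point theorem to a self-map on an appropriate space of absolutely continuous cumulative distribution functions on $\bbR$, exactly as telegraphed in the introduction. I will fix a convex subset $\Gamma$ of cdfs whose members enjoy two-sided tail bounds of a form consistent with the a priori estimates in Lemmata \ref{l:SD}--\ref{l:density}; the bounds $\bbE[(Y_1-x)^+]\ge\bbE[(e^{-cC}B_1-x)^+]$ and its left-tail counterpart are designed precisely to make $\Gamma$ invariant under the map.

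For $F\in\Gamma$, I will first construct the terminal data via quantile matching: set $\tilde h(y):=F_V^{-1}(F(y))$, where $F_V$ is the cdf of $V=f(\eta)$. Since $f$ is bounded and strictly increasing with continuous derivative, $\tilde h$ is bounded and strictly increasing, and $\tilde h(\zeta)\eid V$ whenever $\zeta$ has cdf $F$. Next, propagate $\tilde h$ backwards under the heat semigroup by setting $H(t,y):=\bbE[\tilde h(y+\sigma(B_1-B_t))]$; this yields a $C^{1,2}$ bounded solution of (\ref{eq:NCH}), strictly increasing in $y$, whose spatial derivative satisfies $0<H_y(t,y)\le C/\sqrt{1-t}$ by direct differentiation under the Gaussian kernel together with the boundedness of $\tilde h$. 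Then I will solve the forward SDE (\ref{eq:NCY}) with this $H$ to obtain a process $Y$ on $[0,1]$ and define $\Phi(F)$ to be the cdf of $Y_1$.

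Verifying that Schauder applies requires three ingredients. \emph{Well-posedness of the SDE:} on every $[0,T]\times\bbR$ with $T<1$, the drift is locally Lipschitz in $y$ and of at-most-linear growth thanks to the $H_y$ bound, so standard theory yields a unique strong solution, and moment estimates combined with the integrability of $1/\sqrt{1-t}$ extend $Y$ across $t=1$. \emph{Self-map property:} the mean-reverting sign of the drift (since $H_y>0$) together with a Girsanov change of measure should show that $Y$ is stochastically comparable, in both tails, to a scaled Brownian motion $e^{-cC}B$, producing the two-sided tail bounds quoted in the introduction and hence $\Phi(F)\in\Gamma$. \emph{Continuity and compactness:} topologising $\Gamma$ by weak convergence, the uniform tail bounds provide tightness and hence compactness, while continuity of $\Phi$ will follow from the chain $F^{(n)}\to F\Rightarrow\tilde h^{(n)}\to\tilde h$ pointwise, then via dominated convergence under the Gaussian kernel $H^{(n)}\to H$ and $H^{(n)}_y\to H_y$ uniformly on compacts of $[0,1)\times\bbR$, and finally convergence in law of the associated SDE solutions.

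A fixed point $F^*=\Phi(F^*)$ then yields $(H,Y)$ satisfying (\ref{eq:NCH})--(\ref{eq:NCF}): the terminal identity is built in since $H(1,y)=F_V^{-1}(F^*(y))$ and $F^*$ is the cdf of $Y_1$. The bound on $H_y$ is inherited from the construction; pathwise strong uniqueness on $[0,1)$ is standard given the local Lipschitzness of the drift, and the tail estimates prevent pathology at $t=1$. For the transition density, once $H$ is smooth on $[0,1)\times\bbR$ the drift of (\ref{eq:NCY}) is smooth and the diffusion is non-degenerate, so parabolic regularity (H\"ormander being available but overkill here) delivers a strictly positive $C^{1,2}$ transition density on $[0,t)\times\bbR$ for each fixed $(t,z)$. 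The main obstacle---and the whole purpose of Lemmata \ref{l:SD}--\ref{l:density}---is establishing the uniform two-sided tail estimates for $Y_1$ across $F\in\Gamma$: without them the image may slip to the boundary of $\Gamma$ and Schauder cannot close, so the substantial probabilistic effort lies in controlling the mean-reverting SDE whose drift coefficient $H_y$ may blow up as $t\uparrow 1$.
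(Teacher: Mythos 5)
Your architecture is the same as the paper's: Schauder's fixed point theorem on a convex set of absolutely continuous cdfs, quantile-matching terminal data $h=f(\Phi^{-1}(F(\cdot)))$, backward heat propagation, the forward SDE, and the lower bounds of Lemma \ref{l:SD} to keep the image away from the boundary. However, there is a genuine gap in the compactness/invariance step. The only quantitative constraints you place on $\Gamma$ are the \emph{lower} bounds $\bbE[(Y_1-x)^+]\ge\bbE[(e^{-cC}B_{1}-x)^+]$ and its left-tail analogue, and you then claim that ``the uniform tail bounds provide tightness and hence compactness.'' They cannot: these inequalities bound the tails from \emph{below} (they prevent the law of $Y_1$ from concentrating, which is what you need for $F^*(y)\in(0,1)$ and hence for $h$ to be well defined, strictly increasing and non-constant), whereas tightness, closedness of $\Gamma$, and precompactness of the image all require an \emph{upper} control on the tails. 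The paper's set $D$ contains the additional, and decisive, condition $P'(z)\le C^{\ast}q(\sigma^2,z)$, i.e.\ a uniform Gaussian upper bound on the density of $Y_1$; it is obtained from the Girsanov representation of Lemma \ref{l:girsanov} together with the pointwise bound $M_1\le e^{2cC}$ on the Radon--Nikodym density. That single estimate is what makes $D$ closed, makes $T$ a self-map, gives equicontinuity of $TD$, and supplies the uniform-in-$n$ decay at $\pm\infty$ needed to upgrade locally uniform convergence to sup-norm convergence (the paper works in $C_b(\bbR)$ with the sup norm, not the weak topology). Without it your argument does not close in either topology.

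Two further steps are asserted rather than proved, and they are where the analytic difficulty actually sits, because $H_y(t,y)$ blows up like $(1-t)^{-1/2}$ and the drift $-cyH_y$ is unbounded in $y$. First, ``convergence in law of the associated SDE solutions'' under $F^{(n)}\to F$ is not a consequence of generic stability theorems here; the paper proves it by showing $M_1^n\to M_1$ in $\bbQ$-probability, which requires rewriting the stochastic integral $\int_0^1 W_sH^n_y(s,\sigma W_s)\,dW_s$ via integration by parts (the function $B(t,y)$ of (\ref{e:funcB})) to obtain the uniform bound (\ref{e:Wdominates}). Second, for the transition density the standard parabolic theory does not apply directly because the drift coefficient is unbounded in space; the paper invokes Besala's theorem for parabolic equations with unbounded coefficients, and the positivity and $C^{1,2}$ regularity of $p(\cdot,\cdot;1,z)$ up to the singular terminal time require the Brownian-bridge representation of $r$, the a.s.\ finiteness of $\int_t^1 Y_s^2H_y^2(s,Y_s)\,ds$ (which is where the hypothesis that $h'$ is bounded on compacts enters), and a Chapman--Kolmogorov argument. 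Your appeal to ``parabolic regularity'' skips exactly these points.
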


%%%%%%%%%%%%%%%%%%%%%%%%%%%%%%%%%%%%%%%%%%%
We will prove this theorem by an application of Schauder's fixed-point
theorem. Observe that if we start with an absolutely continuous
probability measure on $\bbR$ with full support, (\ref{eq:NCF}) yields
an increasing function $H(1,\cdot)$, which defines an $H$ solving
(\ref
{eq:NCH}). If we then plug this function into the SDE of (\ref
{eq:NCY}), we arrive at a new probability measure on $\bbR$ associated
with the distribution of $Y_1$. This procedure defines a transformation
from the space of probability measures on $\bbR$ into itself.
Application of Schauder's fixed-point theorem requires a suitable
choice of a closed and convex subset, $D$, of probability measures on
$\bbR$ such that the above transformation maps $D$ into itself and
satisfies the conditions of Schauder's fixed-point theorem.

Before we present the proof of the fixed-point result, we collect some
useful facts on the behaviour of the solutions of (\ref{eq:NCY}) in the
following lemmata. The first lemma observes a striking relationship
between the time 1 laws of the solutions of (\ref{eq:NCY}) and that of
$B_{\sigma^2}$. An immediate consequence of this lemma is that the law
of~$Y_1$, where $Y$ is the solution of (\ref{eq:NCY}) for a given $H$,
has a full support on $\bbR$. This property allows us to compute the
law of $Y_1$ via a Girsanov transform using the law of $B_1$, which is
achieved in the second lemma.
%%%%%%%%%%%%%%%%%%%%%%%%%%%%%%%%%%%%%%%%%%%%
%

%le4.1 #&#
\begin{lemma} \label{l:SD} Suppose $H \in C^{1,2}([0,1)\times\bbR)$
satisfies $0\leq H_y(t,y)\leq C \frac{1}{\sqrt{1-t}}$ for all $(t,y)
\in[0,1) \times\bbR$, and some constant $C$. Let $c\geq0$ be a
constant, then the stochastic differential equation
%
%e4.1 #&#
\begin{equation}
\label{eq:genY} dY_t=\sigma\,dB_t- c Y_t
H_y(t,Y_t) \,dt
\end{equation}
has a unique strong solution on $[0,1]$. Moreover, for any $x>0$,
%
%e4.2 #&#
%e4.3 #&#
\begin{eqnarray}
\bbE\bigl[(Y_1-x)^+\bigr]&\geq& \bbE\bigl[\bigl(e^{-2cC}B_{\sigma^2}-x
\bigr)^+\bigr]>0, \label{e:USD1}
\\
\bbE\bigl[(-x-Y_1)^+\bigr]&\geq&\bbE\bigl[\bigl(-x-e^{-2cC}B_{\sigma^2}
\bigr)^+\bigr]>0 \label{e:USD2},
\end{eqnarray}
and, in particular, $\bbP(Y_1\leq y) \in(0,1)$ for all $y \in\bbR$.
\end{lemma}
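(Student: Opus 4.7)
The plan is to construct the solution in one pass on $[0,1]$, and then exploit an integrating-factor martingale together with a time change to obtain the lower bounds by optional stopping.

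First, for existence and uniqueness: the drift $b(t,y)=-cy H_y(t,y)$ is continuous on $[0,1)\times\bbR$ and locally Lipschitz in $y$ (since $H\in C^{1,2}$), so standard SDE theory yields a unique strong solution on every $[0,1-\eps]$. Applying It\^o to $Y_t^2$ and using $cH_y\ge 0$ gives $\bbE Y_t^2\le\sigma^2 t$; Cauchy--Schwarz together with $\int_0^1(1-s)^{-1/2}ds=2$ then yields $\bbE\int_0^1|Y_s|H_y(s,Y_s)\,ds<\infty$. Hence the drift is absolutely integrable up to $t=1$ almost surely, which lets the solution extend continuously to $[0,1]$.

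Next, set $A_t:=c\int_0^t H_y(s,Y_s)\,ds$, which lies in $[0,2cC]$ by the same integrability. It\^o applied to $M_t:=Y_t e^{A_t}$ yields $dM_t=\sigma e^{A_t}dB_t$, so $M$ is an $L^2$ martingale with $M_0=0$ and $\langle M\rangle_t=\sigma^2\int_0^t e^{2A_s}ds$, while $Y_1=M_1 e^{-A_1}$. Because $e^{-A_1}\in[e^{-2cC},1]$ and $M_1,Y_1$ share a sign, a case split on the sign of $M_1$ establishes the pointwise inequalities
\[
(Y_1-x)^+\ge (e^{-2cC}M_1-x)^+,\qquad (-x-Y_1)^+\ge (-x-e^{-2cC}M_1)^+
\]
for every $x>0$: on the relevant branch $|Y_1|\ge e^{-2cC}|M_1|$ combined with monotonicity of $(\cdot-x)^+$ does it, while on the other branch both sides vanish.

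Finally, Dambis--Dubins--Schwarz presents $M_t=\tilde B_{\tau(t)}$ with $\tilde B$ a Brownian motion in the time-changed filtration $\cG_s:=\cF_{\tau^{-1}(s)}$ and $\tau(t)=\langle M\rangle_t$. Since $e^{2A_s}\in[1,e^{4cC}]$, we obtain $\sigma^2\le\tau(1)\le\sigma^2 e^{4cC}$. The maps $y\mapsto (e^{-2cC}y-x)^+$ and $y\mapsto (-x-e^{-2cC}y)^+$ are convex, so composition with $\tilde B$ produces submartingales, and optional stopping at the bounded stopping times $\sigma^2\le\tau(1)$ yields
\[
\bbE[(e^{-2cC}B_{\sigma^2}-x)^+]\le\bbE[(e^{-2cC}M_1-x)^+]\le\bbE[(Y_1-x)^+],
\]
with the negative-side estimate handled symmetrically. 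Strict positivity is immediate from the Gaussian density of $B_{\sigma^2}$, and $\bbP(Y_1\le y)\in(0,1)$ for every $y\in\bbR$ follows by choosing $x>|y|$ in the appropriate inequality. I expect the main technical subtlety to be verifying that $\tau(1)$ is a $\cG$-stopping time to which optional stopping applies; this boils down to $\{\tau(1)\le s\}=\{\tau^{-1}(s)\ge 1\}\in\cF_1$, placing the event inside $\cG_s$ on $\{\tau^{-1}(s)\ge 1\}$, plus the boundedness of $\tau(1)$.
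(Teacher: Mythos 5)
Your proposal is correct and follows essentially the same route as the paper: non-explosion via a second-moment bound, the integrating factor $Y_te^{A_t}$ written as a time-changed Brownian motion with $\sigma^2\le\tau(1)\le\sigma^2e^{4cC}$, optional sampling of the convex payoff, and the pointwise sign-split comparison between $Y_1$ and $e^{-2cC}M_1$. The only differences are cosmetic (the paper uses Gronwall for the moment bound and leaves the pointwise inequality implicit), so no further comment is needed.
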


%%%%%%%%%%%%%%%%%%%%%%%%%%%%%%%%%%%%%%%%%%%%%%
%
\begin{pf}
Since $y H_y(t,y)$ is locally Lipschitz on $[0,T]\times\bbR$ for any
$T<1$, the above equation has a unique strong solution on $[0,T]$ upto
an explosion time~$\tau$. Since $T$ is arbitrary this implies the
existence of a unique continuous strong solution on $[0,1\wedge\tau)$.
Let $\tau_n:=\inf\{t\in[0,1): |Y_t|>n\}$ and observe that $\tau_n
\uparrow\tau$, a.s. Moreover, for any $t \in[0,1]$
\begin{eqnarray*}
Y_{t\wedge\tau_n}^2 &=& 2 \int_0^{t\wedge\tau_n}
Y_s \sigma\, dB_s - 2c \int_0^{t\wedge\tau_n}
Y_s^2 H_y(s,Y_s) \,ds +
\sigma^2 (t\wedge \tau _n)
\\
& \leq& 2 \int_0^{t\wedge\tau_n} Y_s \sigma
\,dB_s +\sigma^2 ({t\wedge\tau_n}).
\end{eqnarray*}
Thus, by It\^{o}'s isometry and the elementary inequality $x \leq1+ x^2$,
\[
\bbE \bigl[Y_{t\wedge\tau_n}^2 \bigr]\leq1+\sigma^2 +4
\sigma^2\int_0^t\bbE
\bigl[Y_s^2\chf_{[s <\tau_n]} \bigr] \,ds\leq1+
\sigma^2 +4 \sigma^2\int_0^t
\bbE \bigl[Y_{s\wedge\tau_n}^2 \bigr]\,ds.
\]
Therefore, Gronwall's inequality yields $\bbE[Y_{t\wedge\tau_n}^2]
\leq
( 1+\sigma^2 ) e^{4 \sigma^2}$ for all $t \in[0,1]$ and $n \geq1$.
Thus, $(Y_{t\wedge\tau_n})_{n \geq1}$ is uniformly integrable, and
consequently, $\bbP(\tau<t)=0$ and $\bbE[Y_{t}^2] \leq( 1+\sigma^2 )
e^{4 \sigma^2}$ for all $t \in[0,1]$, that is, $Y$ never explodes and
the SDE has a nonexploding strong solution.

To obtain the estimates (\ref{e:USD1}) and (\ref{e:USD2}), let
\[
\tilde{Y}_t =Y_t \exp \biggl(c \int_0^t
H_y (s,Y_s) \,ds \biggr)
\]
and observe that
\[
\tilde{Y}_t=\int_0^t \exp
\biggl(c \int_0^s H_y(r,Y_r)
\,dr \biggr)\sigma\,dB_s.
\]
Thus, $\tilde{Y}_t= W_{T_t}$ for some Brownian motion $W$ and the time
change $T_t$ satisfying
\[
\sigma^2 t \leq T_t= \sigma^2 \int
_0^t \exp \biggl(2 c \int_0^s
H_y(r,Y_r)\,dr \biggr)\,ds \leq\sigma^2
\exp(4 c C) t.
\]
Thus, by the optional sampling theorem, for any $K \in\bbR$ we have
\begin{eqnarray*}
\bbE\bigl[(\tilde{Y}_1-K)^+\bigr]&=&\bbE\bigl[(W_{T_1}-K)^+
\bigr] \geq\bbE\bigl[(W_{\sigma
^2}-K)^+\bigr]>0,
\\
\bbE\bigl[(K-\tilde{Y}_1)^+\bigr]&=&\bbE\bigl[(K-W_{T_1})^+
\bigr] \geq\bbE \bigl[(K-W_{\sigma^2})^+\bigr]>0,
\end{eqnarray*}
which implies (\ref{e:USD1}) and (\ref{e:USD2}).
\end{pf}
%
%%%%%%%%%%%%%%%%%%%%%%%%%%%%%%%%%%%%%%%%%%%%%%%%%%%%%%%%%%%%%%%%%%%
%

%le4.2 #&#
\begin{lemma} \label{l:girsanov} Let $h$ be a bounded, nondecreasing,
and absolutely continuous function, which is not constant. Consider the
solution, $H$, of (\ref{eq:NCH}) with the terminal condition $h$. Then
$|H(t,\cdot)|\leq\|h\|_{\infty}$ for $t\leq1$, and $0<H_y(t,\cdot)
\leq C\frac{1}{\sqrt{1-t}}$ for $t<1$, where $C= \sqrt{\frac
{2}{\sigma
^2 \pi}}\|h\|_{\infty}$. Consequently, there exists a unique, strong
solution, $Y$, of (\ref{eq:genY}) and, for any bounded and continuous
function $g$ and $T\leq1$, we have
\[
\bbE \bigl[g(Y_T) \bigr]=\bbE^{\bbQ} \bigl[g(\sigma
W_T) M_T \bigr],
\]
where $W$ is a Brownian motion on a filtered probability space $(\tilde
{\Om}, \tilde{\cF}, (\tilde{\cF}_t)_{t \in[0,1]},  \bbQ)$ and $(M_t)_{t
\in[0,1]}$ is a strictly positive $((\tilde{\cF}_t),\bbQ)$-martingale
given by
%
%e4.4 #&#
\begin{equation}
\label{e:rnd} M_t:=\exp\biggl(-c \int_0^t
W_s H_y(s,\sigma W_s)\,dW_s -
\frac{c^2}{2
}\int_0^t W_s^2
H^2_y(s,\sigma W_s) \,ds \biggr),
\end{equation}
with $c$ being the constant from Lemma~\ref{l:SD}. Furthermore, $\bbQ
$-a.s., $M_1 \leq e^{2c C}$, and
%
%e4.5 #&#
\begin{equation}
\label{e:Wdominates} \biggl\llvert\int_0^{\tau}
W_s H_y(s, \sigma W_s)\,dW_s
\biggr\rrvert\leq K \bigl(1+ |W_{\tau}| \bigr)\leq K \bigl(1+ W_{1}^*
\bigr),
\end{equation}
where $\tau$ is any stopping time with respect to the natural
filtration of $W$ such that $\tau\leq1$, $\bbQ$-a.s., $W^*_t=\sup_{s
\leq t}|W_s|$, and $K$ is some constant that depends only on $\sigma$
and $\|h\|_{\infty}$.
\end{lemma}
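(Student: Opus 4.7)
The plan is to establish the analytic bounds on $H$ from the heat-kernel representation, invoke Lemma \ref{l:SD} for the strong solution of (\ref{eq:genY}), and then produce the change of measure via a Girsanov argument whose technical heart is a pathwise decomposition of $\int_0^t W_s H_y(s,\sigma W_s)\,dW_s$ obtained through the backward PDE (\ref{eq:NCH}). The same decomposition will simultaneously yield the a.s.\ bound $M_1\leq e^{2cC}$ and the estimate (\ref{e:Wdominates}).

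For the first step I would write $H(t,y)=\bbE[h(y+\sigma\sqrt{1-t}\,N)]$ with $N$ standard normal. The sup-norm bound $|H|\leq\|h\|_\infty$ is immediate, and Gaussian integration by parts gives
\[
H_y(t,y)=\frac{1}{\sigma\sqrt{1-t}}\,\bbE\bigl[h(y+\sigma\sqrt{1-t}\,N)\,N\bigr],
\]
so $|H_y(t,y)|\leq \|h\|_\infty\bbE|N|/(\sigma\sqrt{1-t})=C/\sqrt{1-t}$ with the stated constant $C$; strict positivity of $H_y$ follows because $h$ is nondecreasing and not constant. These bounds match the hypotheses of Lemma \ref{l:SD}, so the unique strong solution $Y$ of (\ref{eq:genY}) is already provided.

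For the key pathwise identity I would introduce
\[
F(t,w):=\frac{1}{\sigma^2}\int_0^{\sigma w}\bigl(H(t,\sigma w)-H(t,u)\bigr)\,du=\frac{wH(t,\sigma w)}{\sigma}-\frac{1}{\sigma^2}\int_0^{\sigma w}H(t,u)\,du.
\]
A direct check gives $F_w(t,w)=wH_y(t,\sigma w)$, and substituting $H_t=-(\sigma^2/2)H_{yy}$ in the computation of $F_t$ yields
\[
F_t(t,w)+\tfrac12 F_{ww}(t,w)=H_y(t,\sigma w)-\tfrac12 H_y(t,0).
\]
Applying Ito's formula to $F(t,W_t)$ on $[0,T]$ for $T<1$ and rearranging produces the identity
\[
\int_0^T W_s H_y(s,\sigma W_s)\,dW_s=F(T,W_T)-\int_0^T H_y(s,\sigma W_s)\,ds+\frac12\int_0^T H_y(s,0)\,ds.
\]
The left-hand representation of $F$ makes manifest that $F\ge 0$ everywhere (the integrand has the sign of $\sigma w$ because $H(t,\cdot)$ is nondecreasing) and that $|F(t,w)|\leq 2\|h\|_\infty|w|/\sigma$; combined with $\int_0^1 H_y(s,\cdot)\,ds\leq 2C$, which follows from $H_y\leq C/\sqrt{1-s}$, the identity extended to stopping times $\tau\leq 1$ by a localisation argument immediately yields (\ref{e:Wdominates}).

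Substituting the same identity into the exponent defining $M$ turns it into
\[
\log M_t=-cF(t,W_t)+c\int_0^t H_y(s,\sigma W_s)\,ds-\frac c2\int_0^t H_y(s,0)\,ds-\frac{c^2}{2}\int_0^t W_s^2 H_y^2(s,\sigma W_s)\,ds,
\]
and because $c,F,H_y\ge 0$, three of the four terms are non-positive while the remaining one is bounded by $2cC$, whence $M_t\leq e^{2cC}$ on $[0,1)$ and, by continuity, at $t=1$. Being a non-negative, pathwise bounded local martingale, $M$ is then a uniformly integrable true martingale; the standard Girsanov argument produces the Brownian motion $\tilde W_t=W_t+c\int_0^t W_s H_y(s,\sigma W_s)\,ds$ under $d\tilde{\bbP}=M_1\,d\bbQ$, under which $\sigma W$ is a weak solution of (\ref{eq:genY}), and pathwise uniqueness (obtained already in Lemma \ref{l:SD}) promotes this to uniqueness in law, delivering $\bbE[g(Y_T)]=\bbE^{\bbQ}[g(\sigma W_T)M_T]$. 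The only delicate step is the deterministic bound on $M_1$: Novikov's criterion cannot be applied directly because $W_s^2 H_y^2$ fails to be integrable near $s=1$, and the PDE-based decomposition above is precisely what extracts the hidden sign in $-cF\leq 0$ needed to circumvent this obstruction.
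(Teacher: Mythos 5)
Your proof follows essentially the same route as the paper's: your function $F(t,w)$ is, up to the factor $-c$ and the substitution $y=\sigma w$, exactly the function $B(t,y)=-\frac{c}{\sigma^2}\int_0^y xH_y(t,x)\,dx$ that the paper introduces (integration by parts converts one into the other), and the pathwise decomposition of $\int_0^t W_sH_y(s,\sigma W_s)\,dW_s$ via the backward heat equation, the resulting a.s.\ bound $M_t\le e^{2cC}$, and the estimate (\ref{e:Wdominates}) all coincide with the paper's argument; your observation that an a.s.\ bounded nonnegative local martingale is automatically a uniformly integrable martingale is a mild streamlining of the paper's appeal to the Bene\v{s}-type criterion on $[0,T]$, $T<1$. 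The one item you omit is the assertion that $M$ is \emph{strictly positive} at $t=1$, which requires $\int_0^1 W_s^2H_y^2(s,\sigma W_s)\,ds<\infty$ a.s.; this follows from your own estimate (\ref{e:Wdominates}), which shows that $\int_0^{\cdot}W_sH_y(s,\sigma W_s)\,dW_s$ is an $L^2$-bounded martingale and hence has integrable quadratic variation, and you should record this step since the strict positivity is used downstream.
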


%%%%%%%%%%%%%%%%%%%%%%%%%%%%%%%%%%%%%%%%%%%%%%%%%%%%%%%%%%%%%%
%
\begin{pf}
Observe that
\[
H(t,y)=\int_{\bbR} h(z) q \bigl(\sigma^2 (1-t),
z-y \bigr) \,dz,
\]
where $q(t, x)$ is the probability density of a normal random variable
with mean $0$ and variance $t$. Then, clearly, $|H(t,y)|\leq\int_{\bbR
} |h(z)| q(\sigma^2 (1-t), z-y) \,dz\leq\|h\|_{\infty}$. Moreover,
$H_y(t,y)$ is strictly positive whenever $t <1$. Indeed,
differentiating above, we have
\[
H_y(t,y)=\int_{\bbR}h(z) q_y \bigl(
\sigma^2(1-t),z-y \bigr) \,dz=\int_{\bbR} q \bigl(
\sigma^2(1-t),z-y \bigr)\,dh(z)>0.
\]

On the other hand,
%
%e4.6 #&#
\begin{eqnarray}
H_y (t,y)&=& \int_{\bbR} h(z)\frac{z-y}{\sigma^2(1-t)}q
\bigl(\sigma ^2(1-t),z-y\bigr) \,dz
\nonumber
\\[-8pt]
\\[-8pt]
\nonumber
&\leq&\sup_{z \in\bbR} h(z)\int_{\bbR}
\frac{|z-y|}{\sigma
^2(1-t)}q\bigl(\sigma^2(1-t),z-y\bigr) \,dz \leq C
\frac{1}{\sqrt{1-t}},
\end{eqnarray}
where $C= \|h(z)\|_{\infty} \sqrt{\frac{2}{\sigma^2 \pi}}$. Hence,
Lemma~\ref{l:SD} implies the existence and the uniqueness of a strong
solution to (\ref{eq:genY}).

Next, we will characterise the distribution of $Y$ on $[0,T]$ for $T<1$
by constructing a weak solution to (\ref{eq:genY}) via a Girsanov
transform. To this end, let $W$ be a Brownian motion on some filtered
probability space $(\tilde{\Om}, \tilde{\cF}, (\tilde{\cF}_t)_{t
\in
[0,1]},\break  \bbQ)$. Then $M$ is a martingale on $[0,T]$ by Corollary~3.5.16
in \cite{ks}. Thus, if we define $\tilde{\bbP}$ on $(\tilde{\Om},
\tilde
{\cF})$ by $\tilde{d\bbP}/d\bbQ=M_T$, $\sigma W$ solves (\ref{eq:genY})
under $\tilde{\bbP}$ on $[0,T]$. Due to the uniqueness in law of the
solutions of (\ref{eq:genY}), for any continuous and bounded function
$g$ we therefore have
\[
\bbE^{\tilde{\bbP}} \bigl[g(Y_T) \bigr]=E^{\bbQ} \bigl[g(
\sigma W_T) M_T \bigr].
\]
We next aim to extend the above equality to $T=1$, which would follow
from the dominated convergence theorem once we demonstrate that $M$ is
a bounded martingale. Direct calculations lead to
\begin{eqnarray*}
M_T&=&\exp \biggl(B(T,\sigma W_T)\\
&&{}+ c\int
_0^T \biggl\{H_y(s,\sigma
W_s) -\frac{1}{2}H_y(s,0)-\frac{c}{2}W_s^2
H_y^2(s,\sigma W_s) \biggr\}\,ds \biggr),
\end{eqnarray*}
where
%
%e4.7 #&#
\begin{equation}
\label{e:funcB} B(t,y)=-\frac{c}{\sigma^2}\int_0^y
x H_y(t,x) \,dx \leq0
\end{equation}
since $H_y$ is positive. Thus, for any $t \leq T$,
%
%e4.8 #&#
\begin{equation}
\label{e:Mbound} M_t \leq\exp\biggl(c\int_0^1
H_y(s,\sigma W_s) \,ds \biggr)\leq e^{2cC}
\end{equation}
implying
\[
\bbE^{\tilde{\bbP}} \bigl[g(Y_1) \bigr]=\bbE^{\bbQ} \bigl[g(
\sigma W_1) M_1 \bigr],
\]
where $M_1:=\lim_{T \rar1} M_T$.

Our next goal is to prove the estimate in (\ref{e:Wdominates}) which
will, in turn, imply that $M_1$ is strictly positive. Let $\tau$ be a
stopping time with respect to the natural filtration of $W$ and bounded
by $1$. Then
\begin{eqnarray*}
c \biggl\llvert\int_0^{\tau} W_s
H_y(s, \sigma W_s)\,dW_s\biggr\rrvert&
\leq&\bigl |B(\tau , \sigma W_{\tau})\bigr| + c\int_0^{\tau}
\biggl\llvert H_y(s, \sigma W_s) -\frac{1}{2}
H_y(s,0)\biggr\rrvert\,ds
\\
&\leq& \bigl|B(\tau, \sigma W_{\tau})\bigr| +3 cC,
\end{eqnarray*}
where $B(t,y)$ is given by (\ref{e:funcB}). A simple application of
integration by parts on $B(t,y)$ yields that $|B(t,y)| \leq\frac
{2c}{\sigma^2}|y|\|h\|_{\infty}$. Hence,
\[
\biggl\llvert\int_0^{\tau} W_s
H_y(s, \sigma W_s)\,dW_s \biggr\rrvert\leq
K \bigl(1+ |W_{\tau}| \bigr)\leq K \bigl(1+ W_{1}^* \bigr),
\]
for some $K$ that depends on $\sigma$ and $\|h\|_{\infty}$ only.

The above estimate also shows that $c \int_0^{t} W_s H_y(s, \sigma
W_s)\,dW_s$ is a square integrable martingale on $[0,1]$ with
\[
c^2 \int_0^1 E^{\bbQ}
\bigl[W_s^2 \bigl(H_y(s, \sigma
W_s) \bigr)^2 \bigr]\,ds \leq2 c^2
K^2 \bigl(1+ E^{\bbQ} \bigl(W^*_1
\bigr)^2 \bigr)<\infty.
\]
As $\{\omega: M_1(\omega)=0\}\subseteq\{\omega: \int_0^1
W_s^2(\omega)
(H_y(s, \sigma W_s(\omega)) )^2 \,ds=\infty\}$, this yields
that $M$ is strictly positive on $[0,1]$, $\bbQ$-a.s. and
\[
M_1=\exp \biggl(-c \int_0^1
W_s H_y(s,\sigma W_s)\,dW_s -
\frac{c^2}{2 }\int_0^1 W_s^2
H^2_y(s,\sigma W_s) \,ds \biggr).
\]
\upqed\end{pf}

Next lemma is not needed for the fixed-point algorithm that we will
consider in order to show the existence of a solution to the system
(\ref{eq:NCH})--(\ref{eq:NCF}). On the other hand, it shows that any
solution to (\ref{eq:NCY}) has a smooth transition density, which is
necessary to construct the equilibrium in our model.
%%%%%%%%%%%%%%%%%%%%%%%%%%%%%%%%%%%%%%%%%%%%%%%%%%%%%%%%%%%%%%%%%%%%%%%%%%%%%%%%%%
%

%le4.3 #&#
\begin{lemma} \label{l:density} Let $h$ be a nonconstant, bounded,
nondecreasing, absolutely continuous function, the derivative of which
is bounded on compacts. Consider the solution, $H$, of (\ref{eq:NCH})
with the terminal condition $h$. Then the unique strong solution, $Y$,
of (\ref{eq:genY}) admits a regular transition density $p(s,y;t,z)$ for
all $0\leq s\leq t\leq1$ for $(y,z)\in\bbR^2$. Moreover, for any
fixed $(t,z)$, $p(s,y;t,z)>0$ on $[0,t)\times\bbR$ and is
$C^{1,2}([0,t) \times\bbR)$.
\end{lemma}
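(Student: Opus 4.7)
My plan is to combine classical parabolic PDE theory (exploiting the uniform ellipticity of the constant diffusion $\sigma$) for times strictly below $1$, with the Girsanov representation of Lemma \ref{l:girsanov} plus a Chapman--Kolmogorov factorisation to cover the endpoint $t=1$, where the drift can blow up.

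First I would check that the drift $b(t,y):=-c y H_y(t,y)$ is smooth on $[0,1)\times\bbR$ and well-behaved on every strip $[0,T]\times\bbR$ with $T<1$. Writing $H(t,y)=\int_{\bbR} h(z)\, q(\sigma^2(1-t),z-y)\,dz$, the convolution with a non-degenerate Gaussian kernel shows $H\in C^{\infty}([0,1)\times\bbR)$; moreover, using that $h$ is bounded and its derivative is bounded on compacts, one obtains, for every $T<1$, local bounds on $H_y$ and all its $y$-derivatives on $[0,T]\times\bbR$. Consequently $b$ is $C^{\infty}$ in $y$, jointly continuous in $(t,y)$, with linear growth $|b(t,y)|\le C_T(1+|y|)$ on $[0,T]\times\bbR$.

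Second, for any fixed $t<1$, apply classical parabolic theory (as in Friedman's \emph{Stochastic Differential Equations and Applications} or \emph{Partial Differential Equations of Parabolic Type}) to the SDE on $[0,t]$. Since the diffusion coefficient $\sigma$ is constant (hence the generator is uniformly elliptic) and the drift is locally H\"older in $t$ and smooth in $y$ with controlled growth, there exists a fundamental solution $p(s,y;t,z)$ of the associated backward Kolmogorov operator which (i) is $C^{1,2}$ in $(s,y)$ on $[0,t)\times\bbR$ for each fixed $(t,z)$, and (ii) coincides with the transition density of the unique strong solution from Lemma \ref{l:girsanov}. Strict positivity on $[0,t)\times\bbR$ follows either from the strong maximum principle applied to the backward Kolmogorov PDE or from the Aronson Gaussian lower bound, both applicable because the diffusion is uniformly elliptic and the drift is locally bounded on $[0,T]\times\bbR$ for any $T<t$.

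Third, to extend to $t=1$, fix $z$ and pick an auxiliary time $s'\in(s,1)$. By Chapman--Kolmogorov,
\[
p(s,y;1,z)=\int_{\bbR} p(s,y;s',y')\,\pi(s',y';1,z)\,dy',
\]
where $\pi(s',y';1,\cdot)$ is the conditional density of $Y_1$ given $Y_{s'}=y'$. The existence of this density, together with the bound $\pi(s',y';1,z)\le e^{2cC}\,q(\sigma^2(1-s'),z-y')$, follows directly from Lemma \ref{l:girsanov} applied on $[s',1]$ from starting point $y'$: the measure $\tilde{\bbP}$ constructed there is equivalent to the Brownian law with Radon--Nikodym derivative $M$ bounded by $e^{2cC}$. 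This pointwise Gaussian-type bound lets me differentiate under the integral in $(s,y)$ at will (the derivatives of $p(s,y;s',y')$ are smooth and decay fast enough in $y'$, again by Step 2 / the Gaussian behaviour of the fundamental solution), yielding $p(s,y;1,z)\in C^{1,2}([0,s')\times\bbR)$ for each $s'<1$, hence on $[0,1)\times\bbR$. Positivity for $t=1$ is transferred from positivity of $p(s,y;s',y')$ and strict positivity of $\pi$.

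The main obstacle is the behaviour at $t=1$, where $H_y$ is only controlled by $(1-t)^{-1/2}$ and the drift $b$ is not locally bounded up to $t=1$, so direct application of classical fundamental-solution theorems on $[0,1]$ fails. The Chapman--Kolmogorov split above is precisely what decouples this singularity: all PDE regularity is needed only on the closed strip $[0,s']\times\bbR$ with $s'<1$, while the jump from $s'$ to $1$ is handled probabilistically by Girsanov with a bounded Radon--Nikodym derivative. The only technical care required is to justify differentiation under the integral, which reduces to standard Gaussian tail estimates coming from the fundamental-solution representation in Step 2.
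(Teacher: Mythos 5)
Your architecture is essentially the paper's: construct/identify the density as a fundamental solution of the Kolmogorov equation on strips $[0,T]\times\bbR$ with $T<1$, then reach the endpoint $t=1$ by a Chapman--Kolmogorov factorisation through an intermediate time, using the Girsanov bound $M_1\le e^{2cC}$ to control the last factor. Two points, however, need repair. First, a secondary one: the drift $-cyH_y(t,y)$ is unbounded in $y$ even on $[0,T]\times\bbR$, so the classical fundamental-solution theorems in Friedman (which assume bounded H\"older coefficients) do not apply directly; the paper instead invokes Besala's result for parabolic equations with unbounded coefficients, after checking that $H_{yy}$ is uniformly bounded on $[0,T]\times\bbR$ so that $\partial_y(yH_y)$ is locally bounded. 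You would need to either cite such a result or localise.

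The genuine gap is the strict positivity of $p(s,y;1,z)$ for \emph{every} fixed $z$. You write that positivity at $t=1$ is ``transferred from \ldots strict positivity of $\pi$,'' but $\pi(s',y';1,z)$ is a conditional density, defined only up to null sets in $(y',z)$; for a fixed $z$ there is no a priori reason a chosen version is positive for a positive-measure set of $y'$. This is exactly the delicate point, and it is the only place where the hypothesis that $h'$ is bounded on compacts is actually needed --- a hypothesis your argument never uses, which is a symptom of the gap. The paper resolves it by writing $p(t,y;u,z)=q(\sigma^2(u-t),z-y)\,r(t,y;u,z)$ with $r$ an expectation of $\exp(-\frac{c}{\sigma^2}\int Y_sH_y\,dY_s-\frac{c^2}{2\sigma^2}\int Y_s^2H_y^2\,ds)$ under the Brownian \emph{bridge} measure pinned at $z$ at time $u$, and then proving $\bbQ^{y\rar z}_{t\rar u}(\int_t^u Y_s^2H_y^2(s,Y_s)\,ds<\infty)=1$. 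Since $H_y(s,\cdot)$ blows up like $(1-s)^{-1/2}$ as $s\uparrow 1$, this finiteness is not automatic: the paper controls $\int_t^1 H_y^2(s,Y_s)\,ds$ pathwise by splitting the Gaussian convolution defining $H_y$ into a compact part (where boundedness of $h'$ on compacts is used) and tail parts handled by integration by parts. This bridge representation also furnishes the jointly continuous, everywhere-defined version of the transition density that the word ``regular'' in the statement requires, which your proposal likewise leaves implicit. Your differentiation under the integral for the $C^{1,2}$ claim at $t=1$ is fine in spirit (the paper does the equivalent by solving the PDE with the bounded terminal datum $p(T,\cdot\,;1,z)$ and invoking Chapman--Kolmogorov), but the positivity step as written does not close.
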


%%%%%%%%%%%%%%%%%%%%%%%%%%%%%%%%%%%%%%%%%%%%%%%%%%%%%%%%%%%%%%%%%%%%%%%%%%%%%%%%
%
\begin{pf}
Due to the Lemma~\ref{l:girsanov}, we have $0<H_y(t,y) \leq C\frac
{1}{\sqrt{1-t}}$ for $t<1$ and $y \in\bbR$, where $H$ is the solution
of (\ref{eq:NCH}) with the terminal condition $h$, and $C= \sqrt
{\frac
{2}{\sigma^2 \pi}}\|h\|_{\infty}$. Furthermore, there exists a unique
the solution, $Y$, to (\ref{eq:NCY}) and for any bounded function $g$
and $0\leq t<u\leq1$,
\begin{eqnarray*}
&&\bbE \bigl[g(Y_u)|Y_t=y \bigr]\\
&&\qquad=\bbE^{\bbQ}
\biggl[g(\sigma W_u)\exp \biggl(-c \int_t^u
W_s H_y(s,\sigma W_s)\,dW_s\\
&&\qquad\quad{} -
\frac{c^2}{2 }\int_0^t W_s^2
H^2_y(s,\sigma W_s) \,ds \biggr) \Big|
W_t=\frac{y}{\sigma} \biggr].
\end{eqnarray*}
Thus, a regular transition density of $Y$ can be defined as
%
%e4.9 #&#
\begin{equation}
\label{e:tdrep} p(t,y;u,z)=q\bigl(\sigma^2(u-t),z-y\bigr)r(t,y;u,z),\qquad 0
\leq t<u \leq1,
\end{equation}
where
%
%e4.10 #&#
\begin{eqnarray}
\label{e:rdef} &&r(t,y;u,z)
\nonumber
\\
&&\qquad:=\bbE^{\bbQ^{y \rar z}_{t \rar u}} \biggl[\exp\biggl(-
\frac
{c}{\sigma^2}\int_t^u Y_s
H_y(s,Y_s)\,dY_s\\
&&\qquad\quad{} -\frac{c^2}{2\sigma
^2}\int
_t^u Y_s^2
H^2_y(s, Y_s) \,ds \biggr) \biggr],\nonumber
\end{eqnarray}
with $\frac{Y}{\sigma}$ being a Brownian bridge from $\frac
{y}{\sigma}$
to $\frac{z}{\sigma}$ on the interval $[t,u]$ under measure $\bbQ^{y
\rar z}_{t \rar u}$. Indeed, the representation (\ref{e:tdrep}) holds
once we show that $r$ is a measurable function and Chapman--Kolmogorov
equations hold. In fact, as we show below $r$ is continuous with
respect to all its parameters, hence, measurable (the easy task of
validating Chapman--Kolmogorov equation is left to the reader).

First, observe that the It\^{o} formula and the PDE (\ref{eq:NCH})
satisfied by $H$ yield [recall that $B(t,y)$ is given by (\ref{e:funcB})]
%
%
%e4.11 #&#
\begin{eqnarray}\label{eq:ryz}
&&e^{B(t,y)-B(u,z)}r(t,y;u,z)
\nonumber
\\
&&\qquad=\bbE^{\bbQ^{y \rar z}_{t \rar u}} \biggl[\exp \biggl(c\int_t^u
\biggl\{ H_y(s,Y_s) -\frac{1}{2}H_y(s,0)-
\frac{c}{2\sigma^2}Y_s^2 H_y^2(s,Y_s)
\biggr\} \,ds \biggr) \biggr]
\nonumber
\\[-8pt]
\\[-8pt]
\nonumber
&&\qquad=\exp \biggl(-\frac{c}{2}\int_t^u
H_y(s,0)\,ds \biggr)\\
&&\qquad\quad{}\times\bbE^{\bbQ^{y \rar
z}_{t \rar u}} \biggl[\exp \biggl(c\int
_t^u \biggl\{H_y(s,Y_s)
-\frac
{c}{2\sigma^2}Y_s^2 H_y^2(s,Y_s)
\biggr\}\,ds \biggr) \biggr].
\nonumber
\end{eqnarray}
Moreover, in view of the SDE representation of Brownian bridges (see
Section~5.6.B in \cite{ks}), the law of $Y$ under $\bbQ^{y \rar z}_{t
\rar u}$ is the same as that of $\tilde{Y}$ under $\bbQ$, where
\[
\tilde{Y}_s:=y\frac{u-s}{u-t}+ z \frac{s-t}{u-t}+ \sigma(u-s)
\int_t^s\frac{dW_r}{u-r},\qquad s \in[t,u].
\]
Therefore,
\begin{eqnarray*}
&&\bbE^{\bbQ^{y \rar z}_{t \rar u}} \biggl[\exp \biggl(c\int_t^u
\biggl\{ H_y(s,Y_s) -\frac{c}{2\sigma^2}Y_s^2
H_y^2(s,Y_s) \biggr\}\,ds \biggr) \biggr]
\\
&&\qquad=\bbE^{\bbQ} \biggl[\exp \biggl(c\int_t^u
\biggl\{H_y(s,\tilde{Y}_s) -\frac
{c}{2\sigma^2}
\tilde{Y}_s^2 H_y^2(s,
\tilde{Y}_s) \biggr\}\,ds \biggr) \biggr],
\end{eqnarray*}
and the desired continuity follows from the continuity of $\tilde{Y}$
with respect to $(t,y,u,z)$ and the dominated convergence theorem that
applies due to the bounds on $H_y$.

In order to show that $r(t,y;u,z)>0$ for all $u \leq1$, it suffices to
show that
%
%e4.12 #&#
\begin{equation}
\bbQ^{y \rar z}_{t \rar u} \biggl( \int_t^u
Y_s^2 H_y^2(s,Y_s)
\,ds <\infty \biggr)=1. \label{e:rispos}
\end{equation}
Indeed, due the uniform bounds on $H_y$, the nonnegative random
variable inside the conditional expectation in (\ref{eq:ryz}) is zero
only if $\int_t^u Y_s^2 H_y^2(s,Y_s)\,ds=\infty$. To this end, fix an
$\omega
$ and observe that $K_{t,u}:=\sup_{t \leq s \leq u} Y_s^2$ satisfies
$\bbQ^{y \rar z}_{t \rar u}(0<K_{t,u}<\infty)=1$. Therefore,
\begin{eqnarray*}
\frac{1}{4 K_{t,u}}\int_t^u
Y_s^2 H_y^2(s,Y_s)
\,ds &\leq& \frac
{1}{4}\int_t^u
H_y^2(s,Y_s)\,ds
\\
&=& \frac{1}{4} \int_t^u \biggl(\int
_{\bbR} h'(z) q \bigl(\sigma^2(1-s),
z-Y_s\bigr)\,dz \biggr)^2\,ds
\\
&\leq& \int_t^u \biggl(\int
_{-1}^1 h'(z+Y_s) q
\bigl(\sigma^2(1-s), z\bigr)\,dz \biggr)^2\,ds
\\
&&+ \int_t^u \biggl(\int_{1}^{\infty}
h'(z+Y_s) q \bigl(\sigma^2(1-s), z\bigr)
\,dz \biggr)^2\,ds
\\
&&+ \int_t^u \biggl(\int_{-\infty}^{-1}
h'(z+Y_s) q \bigl(\sigma^2(1-s), z\bigr)
\,dz \biggr)^2\,ds.
\end{eqnarray*}
Observe that, for the fixed $\omega$, $Y$ is a continuous function of time
and, therefore, takes values in a compact set, which implies that $
h'(z+Y_s)$ is bounded for $z \in[-1,1]$ and all $s \in[t,u]$. This
implies that the first integral is finite since $\int_{-1}^1 q (\sigma
^2(1-s), z)\,dz <1$.

To see the finiteness of the second integral, apply integration by
parts to get
\[
\int_t^u \biggl(-\tilde{h}(1+Y_s)q
\bigl(\sigma^2(1-s),1 \bigr) +\int_1^{\infty}
\tilde{h}(z+Y_s)\frac{z}{\sigma^2(1-s)}q \bigl(\sigma^2(1-s),z
\bigr)\,dz \biggr)^2\,ds,
\]
where $\tilde{h}=h+\|h\|_{\infty}$. Note that $\tilde{h}$ is positive,
therefore, the above integral is bounded from above by
\[
8\|h\|_{\infty}^2\int_t^u
q^2 \bigl(\sigma^2(1-s),1 \bigr)\,ds <\infty.
\]
The third integral can be shown to be finite in the same way.

In order to show that $p(t,y; u,z) \in C^{1,2}([0,u) \times\bbR)$ for
fixed $(u,z)$, where $u < 1$, we will show that it is the fundamental
solution of a parabolic differential equation (see page 3 of \cite
{friedman} for the definition of fundamental solutions). In view of the
relationship between the fundamental solutions of PDEs and transition
densities of diffusion processes (see the discussion following
Definition~5.7.9 in \cite{ks}), let us consider the PDE
%
%e4.13 #&#
\begin{equation}
\label{eq:pdeY} u_t+\tfrac{1}{2}\sigma^2
u_{yy} -cy H_y u_y=0
\end{equation}
on the interval $[0,T]$ where $T<1$. The existence of a fundamental
solution to this PDE will follow from Theorem~1 in \cite{besala} once
we show that conditions (i)--(iii) on page 28 of \cite{besala} are
satisfied. Condition (i) is trivially satisfied for $\sigma$ being a
constant. Moreover, since
\[
\biggl\llvert\frac{\partial}{\partial y} y H_y \biggr\rrvert =\llvert
H_y + y H_{yy}\rrvert
\]
and $H_y(t,y) \leq C \frac{1}{\sqrt{1-T}}$ for all $(t,y) \in
[0,T]\times\bbR$, we can conclude that the function $\frac{\partial
}{\partial y} y H_y$ is locally bounded if $H_{yy}$ can be shown to be
bounded in $(t,y)$ when $y$ belongs to a bounded interval. Indeed, by
directly differentiating $H$ we obtain
\begin{eqnarray*}
|H_{yy}|&\leq& \frac{1}{\sigma^2(1-t)} \biggl(\int_{\bbR}
\bigl|H(1,z)\bigr|q\bigl(\sigma ^2(1-t),z-y\bigr) \,dz\\
&&{} +\int
_{\bbR} \bigl|H(1,z)\bigr|\frac{(z-y)^2}{\sigma
^2(1-t)}q\bigl(\sigma^2(1-t),z-y
\bigr) \,dz \biggr)
\\
&\leq& 2 \frac{\|h(z)\|_{\infty}}{\sigma^2(1-T)},
\end{eqnarray*}
that is, $H_{yy}$ is uniformly bounded on $[0,T]\times\bbR$. Thus, we
have shown that condition (ii) was satisfied. Since the constant
functions solve the (\ref{eq:pdeY}), condition (iii) is satisfied as
well; thus, a fundamental solution, $\Gamma(t,y;s,z)$ to (\ref
{eq:pdeY}) exists. In particular, if one considers this PDE with the
boundary condition $u(T,y)=g(y)$ for some bounded $g$, the solution is
given by
\[
u(t,y)=\int_{\bbR}g(z)\Gamma(t,y; T,z) \,dz.
\]
On the other hand, since the SDE (\ref{eq:genY}) satisfies the
hypotheses of Theorem~5.7.6 in \cite{ks} on the time interval $[0,T]$,
$u$ has the following stochastic representation by this theorem:
\[
u(t,y)=\bbE \bigl[g(Y_T)|Y_t=y \bigr]=\int
_{\bbR} g(z)p(t,y;T,z) \,dz.
\]
Thus,
\[
\int_{\bbR} g(z)p(t,y;T,z) \,dz=\int_{\bbR}g(z)
\Gamma(t,y; T,z) \,dz,
\]
and since $g$ is arbitrary and both $\Gamma$ and $p$ are continuous in
their parameters, we deduce $p(t,y;T,z)=\Gamma(t,y;T,z)$ for all $0
\leq t<T<1$, and thus, it is $C^{1,2}$ on $[0,T)\times\bbR$ for $T<1$.

To show that $p(t,y; 1, z)$ is $C^{1,2}$ on $[0,1)\times\bbR$ for each
$z$ consider (\ref{eq:pdeY}) with the boundary condition
$u(T,y)=p(T,y;1,z)$. Note that $u(T,y)$ is bounded in $y$ since,
due to (\ref{eq:ryz}), we have
%
%e4.14 #&#
\begin{eqnarray}\label{eq:ryzbd}
r(t,y;u,z) &\leq& e^{2cC} e^{-B(t,y)}= e^{2cC}e^{({c}/{\sigma^2})
(yH(t,y)-\int_0^yH(t,x) \,dx)}
\nonumber
\\[-8pt]
\\[-8pt]
\nonumber
&\leq& e^{2cC} e^{({c\|h\|_{\infty
}}/{\sigma^2})|y|},
\end{eqnarray}
where the first inequality is due to bounds on $H_y(t,y)$ and the last
one due to the bounds on $H(t,y)$ obtained in Lemma~\ref{l:girsanov}.
Thus, there exists a unique classical solution, $u(t,y)$, to (\ref
{eq:pdeY}), with the boundary condition $u(T,y)=p(T,y;1,z)$, given by
\[
u(t,y)=\int_{\bbR} p(t,y;T,x)p(T,x;1,z) \,dx
\]
by the definition of fundamental solutions. However, by
Chapman--Kolmogorov equations,
%
%e4.15 #&#
\begin{equation}
\label{e:pt1yz} \int_{\bbR} p(t,y;T,x)p(T,x;1,z)
\,dx=p(t,y;1,z),
\end{equation}
which in turn yields that $p(t,y;1,z)\in C^{1,2}([0,T)\times\bbR)$.
Since $T$ is arbitrary, we have $p(t,y;1,z)\in C^{1,2}([0,1)\times\bbR)$.
\end{pf}
%
%%%%%%%%%%%%%%%%%%%%%%%%%%%%%%%%%%%%%%%%%%%%%%%%%%%%%%%%%%%%%%%%%%%%%%%%%%%%%%%%%

Having collected all the prerequisites, we can now prove our main
theorem.

\begin{pf*}{Proof of Theorem~\ref{th:fp}}
In the setting of Lemma~\ref{l:girsanov}, $M$ defines an equivalent
change of measure between the laws of $Y$ and $\sigma W$. Thus, if we
define $r(y)$ by [see (\ref{e:rdef})]
%
%e4.16 #&#
\begin{equation}
\label{d:r} r(y):=r(0,0;1,y)=\bbE^{\bbQ} \biggl[M_1
\Big|W_1=\frac{y}{\sigma} \biggr],
\end{equation}
then
\[
\bbE \bigl[g(Y_1) \bigr]=\int_{\bbR} g(y) q \bigl(
\sigma^2,y \bigr)r(y) \,dy
\]
and, therefore, the probability density of $Y_1$ under $\bbP$ is given by
%
%e4.17 #&#
\begin{equation}
\label{e:Ydensity} q\bigl(\sigma^2,y\bigr)r(y) \leq q\bigl(
\sigma^2,y\bigr) e^{2cC}.
\end{equation}
The existence of a solution to the system of equations (\ref
{eq:NCH})--(\ref{eq:NCF}) will be shown via a fixed-point argument
applied to a certain operator mapping a class of distribution functions
on $\bbR$ into itself.

Schauder's fixed-point theorem (see Theorem~7.1.2 in \cite{friedman})
states that if $D$ is a closed convex subset of a Banach space and
$T:D\mapsto D$ is a continuous operator, then it has a fixed point if
the space $T D$ is precompact, that is, every sequence in $T D $ has a
subsequence which converges to some element of the Banach space. In
order to apply this theorem, we first need to find a suitable Banach
space which contains a class of probability distribution functions on
$\bbR$ that is large enough to contain the distribution of $Y_1$ where
$Y$ is one of the components of the solution to the system of equations
(\ref{eq:NCH})--(\ref{eq:NCF}). In view of the above discussion, the
distribution of $Y_1$ will be continuous, in fact it will admit a
density. Thus, we may take $C_b(\bbR)$, \emph{the space of bounded
continuous functions on $\bbR$}, equipped with the sup norm as our
underlying Banach space and set $\cP$ as the space of absolutely
continuous distribution functions on $\bbR$, that is, $P \in\cP$ if
$P$ is increasing, $P(-\infty)=0$, $P(\infty)=1$, and there exists a
measurable function $P'$ such that $P(y)=\int_{-\infty}^y P'(z) \,dz$.
Then we can define the set
\begin{eqnarray*}
D&=& \biggl\{P \in\cP: P'(z) \leq
C^{\ast} q \bigl(\sigma^2, z \bigr), \forall z \in\bbR,
\\
&&\int_x^{\infty} (y-x)P'(y)\,dy \geq
\bbE \biggl[ \biggl(\frac{1}{C^{\ast}} W_{\sigma^2}-x \biggr)^+ \biggr],
\forall x>0,
\\
&&{} -\int_{-\infty}^{-x} (y+x) P'(y)\,dy
\geq\bbE \biggl[ \biggl(-x-\frac
{1}{C^{\ast}}W_{\sigma^2} \biggr)^+ \biggr],
\forall x>0 \biggr\},
\end{eqnarray*}
where
\[
C^{\ast}:=\exp \biggl(\frac{\rho\| f(z)\|_{\infty}}{N} \sqrt {\frac{2}{\pi
}}
\sigma \biggr).
\]
The reason for this judicious choice of $C^{\ast}$ will become apparent
when we define the operator $T$. We will prove the existence of a fixed
point in four steps.

\textit{Step} 1: \emph{$D$ is a closed convex set.} It is clear that $D$
is convex. To see it is also closed, suppose that $P_n$ is a sequence
of elements in $D$ converging to some element, $P$, of the Banach space
in the sup norm. Clearly, $P$ is increasing with $P(-\infty)=0$ and
$P(\infty)=1$. Moreover, for any $x \leq y$ in $\bbR$, it follows from
Fatou's lemma that
\[
0 \leq P(y)- P(x)=\lim_{n\rar\infty} \int_x^y
P_n'(z) \,dz \leq\int_x^y
\limsup_{n \rar\infty} P_n'(z) \,dz,
\]
since each $P_n'$ is bounded from above by the same integrable
function, which in turn is an upper bound to the positive function
$\limsup_{n \rar\infty} P_n'$. However, this implies that $P$ is
absolutely continuous and, in particular, there exists a function $P'$
with $0 \leq P'(z) \leq\limsup_{n \rar\infty} P_n'(z) \leq C^{\ast}
q(\sigma^2, z)$ for all $z \in\bbR$.

To complete the proof that $D$ is closed, we need to show
\[
\int_x^{\infty} (y-x)P'(y)\,dy \geq
\bbE^{\bbQ} \biggl[ \biggl(\frac
{1}{C^{\ast}} W_{\sigma^2}-x \biggr)^+
\biggr] \qquad\forall x>0.
\]
Since $P_n$ converges $P$ weakly, there exists a probability space
supporting random variables $(Y_n)_{n \geq0}$ and $Y$ such that $Y_n
\rar Y$, a.s., $Y_n$ has distribution $P_n$, and $Y$ has distribution
$P$. Note that one can directly verify that
\[
\int_{\bbR}(y-x)^2 P_n(dy) \leq
C^{\ast} \bbE \bigl[(W_{\sigma^2}-x)^2 \bigr],
\]
which shows the uniform integrability of the sequence $(Y_n-x)^+$. Therefore,
\[
\int_x^{\infty} (y-x)P(dy)=\lim
_{n \rar\infty}\int_x^{\infty}
(y-x)P_n(dy)\geq\bbE \biggl[ \biggl(\frac{1}{C^{\ast}}
W_{\sigma^2}-x \biggr)^+ \biggr].
\]
Similar arguments show the other inequality. Thus, $D$ is closed.

\textit{Step} 2: \emph{Defining the operator $T$.} For any $P \in D$,
let $H: [0,1]\times\bbR\mapsto\bbR$ be the unique function which
solves the following boundary value problem:
%
%e4.18 #&#
\begin{eqnarray} \label{eq:BVPH}
H_t +\frac{\sigma^2}{2}H_{yy}&=&0,
\nonumber
\\[-8pt]
\\[-8pt]
\nonumber
H(1,y)&=&f\bigl(\Phi^{-1}\bigl(P(y)\bigr)\bigr),
\end{eqnarray}
where $\Phi$ is the cumulative distribution function of a standard
normal random variable. Observe that $h(z):=f(\Phi^{-1}(P(z)))$ is a
bounded, increasing function. Moreover, its derivative given by
$f'(\Phi
^{-1}(P(y)))(\Phi^{-1})'(P(y))P'(y)$ is well defined for all $y\in
\bbR$
as $P(y)\in(0,1)$ for all $P\in D$ and $y\in\bbR$ and, therefore, $h$
is also absolutely continuous. Thus, by Lemma~\ref{l:girsanov}, for all
$t<1$, $0<H_y(t,y)\leq C \frac{1}{\sqrt{1-t}}$, where $C= \sqrt
{\frac
{2}{\sigma^2 \pi}} \|f\|_{\infty} $ is independent of the choice of $P$.

To this $H$ one can associate a unique process $Y$ which solves (\ref
{eq:genY}) for $c=\frac{\sigma^2 \rho}{2N}$ and $Y_1$ is a continuous
random variable with the probability density $q(\sigma^2,y) r(y)$,
where $r$ is defined in (\ref{d:r}). Thus, we can define
\[
T P(y)=\int_{-\infty}^y q \bigl(
\sigma^2,z \bigr) r(z) \,dz.
\]
Note that $TP$ belongs to $D$ due to (\ref{e:USD1}), (\ref{e:USD2}) and
(\ref{e:Ydensity}).

\textit{Step} 3: \emph{$T$ is precompact.}
Since $TD$ is an equicontinuous family of functions, by a version of
the Ascoli--Arzela theorem (see Corollary III.3.3 in \cite{lang}), if
$P_n$ is a sequence in $TD$ then it admits a subsequence which
converges pointwise to $P \in C_b(\bbR)$. Moreover, this convergence is
uniform on every compact subset of $\bbR$. This would mean that $TD$ is
precompact once we show that the convergence is uniform over all $\bbR$.

To do so, let us assume without loss of generality that $P_n$ itself is
the convergent subsequence and consider any $\eps>0$. Due to the
definition of $D$, there exist $x^{\ast}$ and $x_{\ast}$ such that
\begin{eqnarray*}
P_n(x) &\leq& C^{\ast} \int_{-\infty}^x
q \bigl(\sigma^2,y\bigr) \,dy \leq C^{\ast
} \int
_{-\infty}^{x_{\ast}} q \bigl(\sigma^2,y\bigr)
\,dy
\\
&=&C^{\ast} \Phi(x_{\ast}) \leq\frac{\eps}{6}\qquad \forall x
\leq x_{\ast};
\\
1-P_n(x) &\leq& C^{\ast} \int^{\infty}_x
q \bigl(\sigma^2,y\bigr) \,dy \leq C^{\ast} \int
^{\infty}_{x^{\ast}} q \bigl(\sigma^2,y\bigr)
\,dy
\\
&=&C^{\ast} \Phi\bigl(-x^{\ast}\bigr) \leq\frac{\eps}{6}\qquad
\forall x \geq x^{\ast}.
\end{eqnarray*}
Since $P_n$ converges to $P$ pointwise, we also have with the same
$x^{\ast}$ and $x_{\ast}$ that
\[
P(x) \leq\frac{\eps}{4} \qquad\forall x \leq x_{\ast};\qquad 1-P(x) \leq
\frac{\eps}{4}\qquad \forall x \geq x^{\ast}.
\]
Since the convergence is uniform on the compact $[x_{\ast},x^{\ast}]$,
there exist a $K$ such that for all $n \geq K$
\[
\sup_{x \in[x_{\ast},x^{\ast}]}\bigl|P_n(x)-P(x)\bigr| \leq\frac{\eps}{3}.
\]
Thus, for any $n \geq K$ we have
\begin{eqnarray*}
\sup_{x \in\bbR}\bigl|P_n(x)-P(x)\bigr| & \leq& \sup
_{x \in[x_{\ast
},x^{\ast
}]}\bigl|P_n(x)-P(x)\bigr| + \sup_{x \in(-\infty, x_{\ast}]}
\bigl(P_n(x)+P(x)\bigr)
\\
&&{}+ \sup_{x \in[x^{\ast}, \infty)}\bigl(1-P_n(x)+1-P(x)\bigr) \leq
\eps.
\end{eqnarray*}
Thus, we have shown that the convergence of $P_n$ to $P$ is uniform on
$\bbR$, that is, $TD$ is precompact in $C_b(\bbR)$ equipped with the
sup norm. Hence, Schauder's fixed-point theorem yields $T$ has a fixed
point provided $T$ is a continuous operator, which we show next.

\textit{Step} 4: \emph{$T$ is continuous.}
To this end, let $(P_n)_{n \geq1} \subset D$ converge to $P \in D$ in
the sup norm. As $TP_n$ and $TP$ belong to $D$, in view of Problem 14.8(c) in \cite{billingsley}, pointwise convergence of $TP_n$ to $TP$ will
imply uniform convergence since $TP$ is continuous. To each $P_n$ and
$P$, we can associate functions $H^n$ and $H$, $B^n$ and $B$ [see (\ref
{e:funcB})], and the processes $M^n$ and $M$ from Lemma~\ref{l:girsanov}.

Pointwise convergence of $TP_n$ to $TP$ will follow immediately once we
can show that for any continuous and bounded function $g$
\[
\lim_{n \rar\infty} \bbE^{\bbQ} \bigl[g(\sigma
W_1) M^n_1 \bigr] = \bbE^{\bbQ
}
\bigl[g(\sigma W_1) M_1 \bigr].
\]
In view of the uniform bound on $M^n$ and $M$ due to (\ref{e:Mbound}),
the above convergence will hold if we can show that $M^n_1$ converges
to $M_1$ in $Q$-probability.

In order to get the estimates to prove this convergence, first note
that, due to Lemma~\ref{l:girsanov} for any stopping time, $\tau$,
bounded by $1$, we have
\[
c \biggl\llvert\int_0^{\tau} W_s
H^n_y(s, \sigma W_s)\,dW_s
\biggr\rrvert\leq K \bigl(1+ |W_{\tau}| \bigr)\leq K \bigl(1+ W_{1}^*
\bigr),
\]
for some $K$ independent of $n$. This shows that $c \int_0^{t} W_s
H^n_y(s, \sigma W_s)\,dW_s$ is a square integrable martingale on
$[0,1]$ with
\[
c^2 \int_0^1 \bbE^{\bbQ}
\bigl[W_s^2 \bigl(H^n_y(s,
\sigma W_s) \bigr)^2 \bigr]\,ds \leq K \bigl(1+
\bbE^{\bbQ} \bigl(W^*_1 \bigr)^2 \bigr),
\]
where $K$ is a constant independent of $n$. Let
\[
N^n_t:=\int_0^{t}
W_s \bigl\{H^n_y(s, \sigma
W_s)-H_y(s, \sigma W_s) \bigr\}
\,dW_s.
\]
Since [recall that $B(t,y)$ is given by (\ref{e:funcB})]
\[
-c \int_0^{1} W_s
H_y(s, \sigma W_s)\,dW_s=B(1, \sigma
W_{1}) + c\int_0^{1} \biggl
\{H_y(s, \sigma W_s) -\frac{1}{2}H_y(s,0)
\biggr\}\,ds,
\]
integrating $B^n$ and $B$ by parts we obtain
\begin{eqnarray*}
c N^n_1 &=& \frac{c}{\sigma}W_1
\bigl(H^n(1, \sigma W_1)-H(1, \sigma W_1)
\bigr)-\frac{c}{\sigma^2}\int_0^{\sigma W_1} \bigl\{
H^n(1,y)-H(1,y) \bigr\}\,dy
\\
&&{}+ c\int_0^1 \bigl\{H_y(s,\sigma
W_s) -H^n_y(s,\sigma W_s)
\bigr\} \,ds- \frac{c}{2}\int_0^1 \bigl
\{H_y(s,0) - H^n_y(s,0) \bigr\}\,ds.
\end{eqnarray*}
As $H^n$ are uniformly bounded and $H^n_y \leq C \frac{1}{\sqrt{1-t}}$
for $t <1$, if we can show that $H^n(1, y) \rar H(1,y)$ and $H^n_y(t,
y) \rar H_y(t,y)$ for all $y \in\bbR$ and $t \in[0,1)$, the above
will immediately imply that $N_1^n$ converges to $0$, $\bbQ$-a.s.
Moreover, it will also imply convergence in $L^p(\bbQ)$ for all $p \in
[1, \infty)$ in view of the bound obtained in (\ref{e:Wdominates}). In
particular, we will have
%
%e4.19 #&#
\begin{equation}
\label{e:QVlimit} \lim_{n \rar\infty} \bbE^{\bbQ} \biggl[\int
_0^1 W_s^2 \bigl
\{H^n_y(s, \sigma W_s)-H_y(s,
\sigma W_s) \bigr\}^2\,ds \biggr]=0.
\end{equation}
Thus,
%
%e4.20 #&#
\begin{equation}\qquad
\label{e:QVlimit1} \lim_{n \rar\infty} \int_0^1
W_s^2 \bigl\{H^n_y(s, \sigma
W_s)-H_y(s, \sigma W_s) \bigr
\}^2\,ds =0\qquad \mbox{in $\bbQ$-probability}.
\end{equation}
Moreover,
\begin{eqnarray*}
&&\int_0^1 W_s^2
\bigl\llvert H^n_y(s, \sigma W_s)-H_y(s,
\sigma W_s)\bigr\rrvert H_y(s, \sigma W_s)
\,ds
\\
&&\qquad\leq\int_0^1 W_s^2
\bigl\{H^n_y(s, \sigma W_s)-H_y(s,
\sigma W_s) \bigr\}^2\,ds\int_0^1
W_s^2 \bigl(H_y(s, \sigma W_s)
\bigr)^2\,ds,
\end{eqnarray*}
which in turn [since due to (\ref{e:Wdominates}), $\int_0^1 W_s^2
(H_y(s, \sigma W_s) )^2\,ds<\infty$ $\bbQ$-a.s.] implies
%
%e4.21 #&#
\begin{eqnarray}
\label{e:QVlimit2} \lim_{n \rar\infty} \int_0^1
W_s^2 \bigl\{H^n_y(s, \sigma
W_s)-H_y(s, \sigma W_s) \bigr
\}H_y(s,\sigma W_s)\,ds =0
\nonumber
\\[-8pt]
\\[-8pt]
\eqntext{\mbox{in $
\bbQ$-probability}.}
\end{eqnarray}
Combining (\ref{e:QVlimit1}) and (\ref{e:QVlimit2}), we can deduce that
\[
\lim_{n \rar\infty} \int_0^1
W_s^2 \bigl(H^n_y(s, \sigma
W_s) \bigr)^2\,ds=\int_0^1
W_s^2 \bigl(H_y(s, \sigma W_s)
\bigr)^2\,ds\qquad \mbox{in $\bbQ$-probability}.
\]
Together with $N^n_1 \rar0$, $\bbQ$-a.s. the above implies $M^n_1
\rar
M_1$, $\bbQ$-probability.

Thus, it remains to show that $H^n(1, y) \rar H(1,y)$ and $H^n_y(t, y)
\rar H_y(t,y)$ for all $y \in\bbR$ and $t \in[0,1)$. Indeed, $\lim_{n
\rar\infty}H^n(1,y)= \lim_{n \rar\infty}f(\Phi
^{-1}(P_n(y)))=f(\Phi
^{-1}(P(y)))$ in view of the continuity of $f\circ\Phi^{-1}$ on
$(0,1)$ and the fact that the sequence $(P_n(y))$ converges to a limit
$P(y)\in(0,1)$ for any $y \in\bbR$ due to the definition of $D$.

Next, observe that for $t<1$
\begin{eqnarray*}
&&\bigl|H^n_y (t,y)-H_y (t,y)\bigr| \\
&&\qquad\leq\int
_{\bbR} \bigl\llvert H^n(1,z+y)-H(1,z+y) \bigr
\rrvert\frac{|z|}{\sigma^2(1-t)}q \bigl(\sigma^2(1-t), z \bigr) \,dz.
\end{eqnarray*}
As $H^n$ and $H$ are bounded by $\|f\|_{\infty}$, the convergence to
$0$ follows from the dominated convergence theorem and that $H^n(1, y)
\rar H(1,y)$ as $n \rar\infty$.

Thus, we have verified that $T$ is continuous operator, $D$ is a closed
and convex subset of a Banach space and $TD$ is precompact. Therefore,
by Schauder's fixed-point theorem, $T$ has a fixed point $P$, that is, $TP=P$.
For this $P$, define $H$ as the solution to (\ref{eq:BVPH}) and $Y$ as
the corresponding unique solution to (\ref{eq:NCY}). Then $(H,Y)$ is
the solution to the system of equations (\ref{eq:NCH})--(\ref{eq:NCF}).

To complete the proof of the theorem, we need to show that the solution
to (\ref{eq:NCY}) has a transition density with the required smoothness
and positivity properties. This follows from the Lemma~\ref{l:density}
once we
observe that $h(z):=f(\Phi^{-1}(P(z)))$ satisfies the required
conditions. It is obvious that $h$ is bounded (since $f$ is),
nonconstant and nondecreasing (as $f$, $\Phi$ and $P$ are). Moreover,
$h'(y)=f'(\Phi^{-1}(P(y)))(\Phi^{-1})'(P(y))P'(y)$, is well defined for
all $y\in\bbR$ as $P(y)\in(0,1)$ for all $P\in D$ and $y\in\bbR$ and,
therefore, $h$ is absolutely continuous. Let $K\subset\bbR$ be a
compact, then since $P$ is continuous, $P(z)\in K_1$ for all $z\in K$,
where $K_1\subset(0,1)$ is also a compact. As $\Phi^{-1}\in
C^1((0,1))$, this implies that $(\Phi^{-1})'(P(y))$ is bounded for all
$y\in K$. Similarly, $f'(\Phi^{-1}(P(y)))$ is bounded for all $y\in K$.
As boundedness of $P'$ follows from the fact that $P\in D$, this yields
that $h'$ is bounded on compacts and, therefore, satisfies the conditions
of Lemma~\ref{l:density}.
%%%%%%%%%%%%%%%%%%%%%%%%%%%%%%
\end{pf*}

%s5 #&#
\section{Construction of the equilibrium}\label{s:eqm}
Suppose $H$ is the function determined in Theorem~\ref{th:fp}. As
briefly discussed in Section~\ref{s:eq}, if we can identify an
admissible strategy $X$ such that: (i) $\what{\alpha}$ is given by
$-\frac{\sigma^2 \rho}{2N}Y_t H_y(t,Y_t)$, and (ii) $X_1$
satisfies (\ref{eq:iopt}), then $(H,X)$ will be a candidate equilibrium
in view of Proposition~\ref{p:ioptimality} once we show that $U(G)$ is
a true $\cF^M$-martingale. The following theorem gives such an $X$.
%

%th5.1 #&#
\begin{theorem} \label{t:insider} Let $H$ and $p$ be the functions
defined in Theorem~\ref{th:fp}. Then there exists a unique process
$(Y_t)_{t \in[0,1)}$ which solves
%
%e5.1 #&#
\begin{eqnarray}
\label{eq:sdeY} dY_t=\sigma\,dB_t + \biggl\{-
\frac{\sigma^2 \rho}{2N} Y_t H_y(t,Y_t) +
\sigma^2 \frac{p_y}{p}\bigl(t,Y_t;1,
H^{-1}(1,V)\bigr) \biggr\}\,dt,
\nonumber
\\[-8pt]
\\[-8pt]
\eqntext{t \in[0,T],}
\end{eqnarray}
for all $T<1$. Moreover, $Y$ is a $(\bbP^v,\cF^I)$-semimartingale with
$\bbP^v(\lim_{t\rar1}Y_t=H^{-1}(1,V))=1$ for every $v \in f(\bbR)$ and
\[
dY_t=\sigma\,dB^Y_t -\frac{\sigma^2 \rho}{2N}
Y_t H_y(t,Y_t)\,dt, \qquad t \in[0,1]
\]
under $\cF^M$.
\end{theorem}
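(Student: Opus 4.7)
I would split the argument into three steps: (i) existence and uniqueness of the strong solution of (\ref{eq:sdeY}) on $[0,T]$ for every $T<1$, (ii) the bridge convergence $Y_t\to H^{-1}(1,V)$ as $t\to 1$, and (iii) identification of the $\cF^M$-dynamics via a law-identification between $Y$ and the process $\tilde Y$ of Theorem \ref{th:fp}.

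\textbf{Step (i).} Work under $\bbP^v$, so that $z^{\ast}:=H^{-1}(1,v)$ is a deterministic constant. Theorem \ref{th:fp} gives $yH_y(t,y)$ continuous and locally Lipschitz in $y$, while Lemma \ref{l:density} gives $p(t,y;1,z^{\ast})$ strictly positive and of class $C^{1,2}$ on $[0,1)\times\bbR$. Consequently, the drift
\[
b^v(t,y):=-\frac{\sigma^2\rho}{2N}\,y\, H_y(t,y)+\sigma^2 \frac{p_y}{p}(t,y;1,z^{\ast})
\]
is continuous and locally Lipschitz in $y$ on $[0,T]\times\bbR$ for every $T<1$, yielding a unique $\cF^I$-adapted strong solution up to an explosion time. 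Non-explosion on $[0,T]$ follows because $p(\cdot,\cdot;1,z^{\ast})$ is space-time harmonic for the generator of $\tilde Y$: Girsanov's theorem identifies the law of $Y$ on $\cF_T$ with the law of $\tilde Y$ weighted by the Radon--Nikodym density $p(T,\tilde Y_T;1,z^{\ast})/p(0,0;1,z^{\ast})$, so the non-explosion of $\tilde Y$ transfers to $Y$. The SDE representation then yields the $(\bbP^v,\cF^I)$-semimartingale property.

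\textbf{Step (ii).} The Girsanov argument above identifies the law of $Y$ under $\bbP^v$ with the Doob $h$-transform of $\tilde Y$ by $h=p(\cdot,\cdot;1,z^{\ast})$, equivalently with the law of $\tilde Y$ conditioned on $\tilde Y_1=z^{\ast}$. The smoothness and strict positivity of $p$ established in Lemma \ref{l:density} make the standard theory of diffusion bridges applicable and yield $Y_t\to z^{\ast}$ as $t\to 1$, $\bbP^v$-almost surely for every $v\in f(\bbR)$. Thus $Y$ extends continuously to $[0,1]$ with $Y_1=H^{-1}(1,V)$, $\bbP$-a.s.

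\textbf{Step (iii).} For any bounded measurable functional $F$ on $C([0,1],\bbR)$, the disintegration of $\bbP$ and the bridge identification of Step (ii) give
\[
\bbE[F(Y)]=\int \bbE^v[F(Y)]\,\bbP(V\in dv)=\int \bbE\bigl[F(\tilde Y)\,\big|\,\tilde Y_1=H^{-1}(1,v)\bigr]\,\bbP(V\in dv).
\]
By (\ref{eq:NCF}), $H^{-1}(1,V)\eid \tilde Y_1$, so the right-hand side equals $\bbE[F(\tilde Y)]$. Hence $Y$ and $\tilde Y$ have the same law under $\bbP$, and since $\tilde Y$ satisfies (\ref{eq:NCY}) in its natural filtration by Theorem \ref{th:fp}, the process $Y$ satisfies the asserted dynamics in $\cF^M=\cF^Y$. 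The most delicate part of the argument is Step (ii): justifying that the $h$-transformed process actually attains its conditioning value at $t=1$ requires careful control of the exploding drift $\sigma^2 p_y/p$ near the terminal time, which I would handle either by a time-reversal of the $h$-process or through a quadratic-variation estimate exploiting the Gaussian-type bound $|p_y/p|\lesssim 1/\sqrt{1-t}$ that is implicit in the representation of $p$ in Lemma \ref{l:density}.
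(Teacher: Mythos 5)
Your overall architecture --- construct the conditioned process as an $h$-transform of the unconditioned diffusion, identify it with a bridge to read off the terminal behaviour, and transfer the $\cF^M$-dynamics from the fixed-point process of Theorem \ref{th:fp} by equality in law --- is the same as the paper's. Step (i) is essentially right, though the paper proceeds in the opposite order: it first builds a weak solution of (\ref{eq:sdeY}) as the $h$-transform of the non-exploding unconditioned diffusion, then proves pathwise uniqueness via Proposition IX.3.2 of \cite{ry}, and concludes by Yamada--Watanabe; your phrasing "the non-explosion of $\tilde Y$ transfers to $Y$ by Girsanov" is mildly circular as stated (you cannot apply Girsanov to a possibly exploding solution), but is repairable along exactly those lines. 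Your Step (iii) shortcut --- that the canonical semimartingale decomposition in the natural filtration is determined by the law --- is legitimate and replaces the paper's more explicit optional-projection computation based on the enlargement-of-filtration decomposition of $\xi$.

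The genuine gap is where you flag it, in Step (ii), and it cannot be closed by appeal to "standard theory of diffusion bridges": the diffusion here is non-Gaussian, its drift coefficient $yH_y(t,y)$ is unbounded in $y$, and $p$ is known only through the representation $p=q\cdot r$ with the regularity, positivity and exponential bounds of Lemma \ref{l:density}. In particular, the estimate $|p_y/p|\lesssim 1/\sqrt{1-t}$ that you invoke is established nowhere in the paper and is not plausible uniformly in $y$ (already for the Brownian bridge one has $p_y/p(t,y;1,z)=(z-y)/(\sigma^2(1-t))$). The paper closes this step by citing a dedicated result on Markov bridges (Theorem 2.2 of \cite{smb}) and carefully verifying its hypotheses from Lemma \ref{l:density}. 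Relatedly, you never establish the $(\bbP^v,\cF^I)$-semimartingale property of $Y$ on the closed interval $[0,1]$: your Step (i) gives it only on $[0,T]$ for $T<1$, while the drift $\sigma^2 p_y/p$ blows up as $t\rar 1$, so the integrability $\int_0^1 |p_y/p|(s,Y_s;1,H^{-1}(1,V))\,ds<\infty$ is exactly what must be proved. The paper does this by exhibiting the law of $Y$ under $\bbP^v$ as an equivalent measure change (via the martingale $L$ built from $r$) of a Brownian bridge, which is a semimartingale up to and including time $1$; some argument of this kind is indispensable and is missing from your proposal.
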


\begin{pf}
We will first show that there exists a unique weak solution to (\ref
{eq:sdeY}) on $[0,T]$ for any $T<1$. Then Proposition IX.3.2 in \cite
{ry} will imply the uniqueness of strong solutions since if $Y^1$ and
$Y^2$ are two strong solutions, then $Y^1-Y^2$ satisfies
\[
Y^1_t-Y^2_t= \int
_0^t b \bigl(s, Y^1_s,
Y^2_s, V \bigr)\,ds
\]
for some deterministic function $b$ and, therefore, its local time
process at level $0$ is identically $0$. The strong uniqueness combined
with a weak solution will lead to the existence of a unique strong
solution by a result due to Yamada and Watanabe (see Corollary~5.3.23
in \cite{ks}). To show the existence of a weak solution, fix $T<1$ and
let $N_t:=p(t,\zeta_t; 1, H^{-1}(1,v))$ for $t \leq T$ where $v \in
\bbR
$ and $\zeta$ is the unique strong solution of
\[
d\zeta_t =\sigma\,d\beta_t -\frac{\sigma^2 \rho}{2N}
\zeta_t H_y(t,\zeta_t)\,dt
\]
on $[0,1]$ under a probability measure $\tilde{\bbP}$, where $\beta$ is
a $\tilde{\bbP}$-Brownian motion as established in Theorem~\ref{th:fp}.
The same theorem also gives $p$ as the transition density of $\zeta$.
Then $(N_t)_{t \in[0,T]}$ is a strictly positive and bounded
martingale with respect to the natural filtration of $\zeta$ as a
consequence of It\^{o} formula and the estimates on $p$ obtained in (\ref
{eq:ryzbd}). Thus, $\frac{N_T}{N_0}$ has expectation $1$ under $\tilde
{\bbP}$ and defines an equivalent change of measure on the $\sigma
$-algebra $\cF^{\zeta}_T$. Since
\[
dN_t=\sigma N_t \frac{p_y}{p} \bigl(t,
\zeta_t;1, H^{-1}(1,v) \bigr)\,d\beta_t,
\]
then it follows from Girsanov's theorem that under the new measure,
$\bbQ^T$,
\[
d\zeta_t =\sigma\,dW_t + \biggl\{-\frac{\sigma^2 \rho}{2N}
\zeta_t H_y(t,\zeta_t) + \sigma^2
\frac{p_y}{p} \bigl(t,\zeta_t;1, H^{-1}(1,v) \bigr)
\biggr\} \,dt
\]
for some $\bbQ^T$-Brownian motion. Thus, $\zeta$, as a solution of the
above under $\bbQ^T$ is a weak solution of (\ref{eq:sdeY}) on $[0,T]$.
Moreover, the weak uniqueness holds since the distribution of $\zeta$
under $\bbQ^T$ has a one-to-one correspondence with the distribution of
$\zeta$ under the original measure via the change of measure
martingale $p(t,\zeta_t; 1, H^{-1}(1,v))$. More precisely, for any
bounded function $F$ and points $0 = t_0 < \cdots<t_n = T$,
%
%
%e5.2 #&#
\begin{eqnarray}
\label{eq:Ylaw} &&\hspace*{-4pt}\bbE^{\bbQ^T} \bigl[F(\zeta_{t_1}, \ldots,
\zeta_{t_n}) \bigr]\nonumber\\
&&\hspace*{-6pt}\qquad= \bbE^{\tilde{\bbP
}} \biggl[F(\zeta_{t_1},
\ldots, \zeta_{t_n})\frac{p(T,\zeta_T; 1,
H^{-1}(1,v))}{p(0,0; 1, H^{-1}(1,v))} \biggr]
\nonumber
\\[-8pt]
\\[-8pt]
\nonumber
&&\hspace*{-6pt}\qquad=\int_{\bbR}\cdots\int_{\bbR}F(y_1,
\ldots, y_n)\\
&&\hspace*{-12pt}\qquad\quad{}\times\frac{p(0,0;t_1,y_1)\cdots
p(t_{n-1},y_{n-1};t_n,y_n)p(T,y_n;1,H^{-1}(1,v))}{p(0,
0;1,H^{-1}(1,v))}\,dy_1\cdots
\,dy_n.
\nonumber
\end{eqnarray}

Hence, we conclude the existence of a unique strong solution, $Y^T$, of
(\ref{eq:sdeY}) over the interval $[0,T]$ under $\bbP^v$. Define $Y$ by
$Y_t = Y^T_t \chf_{t \leq T}$ and observe that due to the uniqueness of
strong solutions $Y$ is well defined and is the unique process that
solves (\ref{eq:sdeY}).

Next, we want to extend the process $Y$ to time-1 by considering its
limit. This limit exists in view of Theorem~2.2 in \cite{smb}. Note
that Assumption~2.2 of \cite{smb} is satisfied since $(t,y) \mapsto
p(t,y;u,z)$ is $C^{1,2}$ on $[0,u)\times\bbR$ and
$p(t,y;u,z)=q(\sigma
^2(u-t), z-y) r(t,y;u,z)$, where $q$ is the transition density of
standard Brownian motion and $r$ is a strictly positive function with
exponential bounds given by~(\ref{eq:ryzbd}), which in particular
implies $p$ generates a time inhomogeneous Feller semigroup. Moreover,
the same result also yields $\bbP^v(\lim_{t\rar1}Y_t=H^{-1}(1,V))=1$.

To show the semimartingale property of $Y$ let $z=H^{-1}(1,v)$ and
recall from (\ref{e:rdef}) that
\begin{eqnarray*}
&&r(t,y;1,z)\\
&&\qquad=\bbE^{\bbQ^{y \rar z}_{t \rar1}} \biggl[\exp \biggl(-\frac{\rho
}{2N}\int
_t^1 X_s H_y(s,X_s)
\,dX_s -\frac{\sigma^2\rho^2}{8 N^2}\int_t^1
X_s^2 H^2_y(s, X_s)
\,ds \biggr) \biggr].
\end{eqnarray*}
In view of the Markov property of Brownian bridges, we have
\begin{eqnarray*}
&&r(t,X_t;1,z)\\
&&\qquad=\bbE^{\bbQ^{0 \rar z}_{0 \rar1}} \biggl[\exp \biggl(-
\frac
{\rho}{2N}\int_t^1 X_s
H_y(s,X_s)\,dX_s\\
&&\qquad\quad{} -\frac{\sigma^2\rho^2}{8 N^2}\int
_t^1 X_s^2
H^2_y(s, X_s) \,ds \biggr) \Big|
\cF^X_t \biggr],
\end{eqnarray*}
where $(\cF^X)$ is the usual augmentation of the natural filtration of
$X$ since both $\int_t^1 X_s H_y(s,X_s)\,dX_s$ and $\int_t^1 X_s^2
H^2_y(s, X_s) \,ds$ are measurable with respect to $\sigma(X_u; u\in[t,1])$.

Therefore,
\[
L_t:=r(t,X_t;1,z)\exp \biggl(-\frac{\rho}{2N}\int
_0^t X_s H_y(s,X_s)
\,dX_s -\frac{\sigma^2\rho^2}{8 N^2}\int_0^t
X_s^2 H^2_y(s, X_s)
\,ds \biggr)
\]
is a $\bbQ^{0 \rar z}_{0 \rar1}$-martingale. Moreover, it is strictly
positive due to (\ref{e:rispos}). Thus, we can define $\bbP^{0,z}$ on
$\cF^X_1$ via $\frac{d\bbP^{0,z}}{d\bbQ^{0 \rar z}_{0 \rar1}}=L_1$.

Recall that under $\bbQ^{0 \rar z}_{0 \rar1}$ $X$ solves the
following SDE:
\[
X_t= \sigma W^z_t + \int
_0^t\frac{z-X_s}{1-s}\,ds,
\]
where $W^z$ is a $\bbQ^{0\rar z}_{0 \rar1}$-Brownian motion. Thus, a
straightforward application of Girsanov's theorem yields that $X$
solves (\ref{eq:sdeY}) once $B$ is replaced with the $\bbP
^{0,z}$-Brownian motion defined by the Girsanov transform since
\[
\frac{dL_t}{L_t}= \biggl(\frac{r_x(t,X_t;1,z)}{r(t,X_t;1,z)}-\frac
{\sigma
\rho}{2 N}X_tH_y(t,X_t)
\biggr)\,dW^z_t.
\]
Since semimartingale property is preserved under equivalent changes of
measure and the strong uniqueness holds for the solutions of (\ref
{eq:sdeY}), we obtain the desired semimartingale property of its unique
solution.
% note that the distribution of $(Y_t)_{t \in[0,T]}$ under $\bbP^v$ is
%the same as that of $(\zeta_t)_{t \in[0,T]}$ under $\bbQ^T$ for any
%$T<1$. Thus, for any bounded function $F$ and points $0 = t_0 < \ldots,
%<t_n = T$
%\[
%\bbE^{v}[F(Y_{t_1}, \ldots, Y_{t_n})]=\int_{\bbR}\ldots\int_{
%\bbR}F(y_1, \ldots, y_n)\frac{p(0,0;t_1,y_1)\ldots
%p(T,y_n;1,H^{-1}(1,v))}{p(0, 0;1,H^{-1}(1,v))}\,dy_1\ldots dy_n.
%\]
%However, the above representation implies that the distribution of $Y$
%under $\bbP^v$ is the distribution of $\zeta$ under $P$ but
%conditioned on the event that $\zeta_1=H^{-1}(1,v)$, which proves the
%desired convergence.

Having shown the semimartingale property it remains to demonstrate the
claimed representation of $Y$ under $\cF^M$. Suppose that $\xi$ is a
solution of
%
%e5.3 #&#
\begin{equation}
\label{eq:xi} \xi_t=\sigma\beta_t -\int
_0^t \frac{\sigma^2 \rho}{2N} \xi_s
H_y(s,\xi _s ) \,ds.
\end{equation}
Then $\xi$ has the transition density $p$. If one enlarges the
filtration of $\xi$ with $\xi_1$, then under the enlarged filtration
$\xi$ has the following decomposition:
%
%e5.4 #&#
\begin{eqnarray}
\label{eq:xie} d\xi_t=\sigma\,dW_t + \biggl\{-
\frac{\sigma^2 \rho}{2N} \xi_t H_y(t,\xi _t) +
\sigma^2 \frac{p_y}{p}(t,\xi_t;1, \xi_1)
\biggr\}\,dt,
\nonumber
\\[-8pt]
\\[-8pt]
\eqntext{t \in[0,1),}
\end{eqnarray}
where $(W_t)_{t \in[0,1)}$ is a Brownian motion in the enlarged
filtration independent of $\xi_1$ (see Theorem~1.6 in \cite{my}).

On the other hand, since $\bbE[\xi_t|\cF^{\xi}_t]=\xi_t$ for
$t<1$, we
must have
\begin{eqnarray*}
\sigma\beta_t -\int_0^t
\frac{\sigma^2 \rho}{2N} \xi_s H_y(s,\xi _s )
\,ds&=&\bbE\bigl[\sigma W_t|\cF^{\xi}_t\bigr]+
\bbE\biggl[ \int_0^t \sigma^2
\frac
{p_y}{p}(s,\xi_s;1, \xi_1) \,ds \Big|
\cF^{\xi}_t \biggr]
\\
&&{} -\int_0^t \frac{\sigma^2 \rho}{2N}
\xi_s H_y(s,\xi_s) \,ds.
\end{eqnarray*}
Since the projection of a martingale onto a smaller filtration is still
a martingale, from the above equation we conclude that $\bbE[ \int_0^t \sigma^2 \frac{p_y}{p}(s,\xi_s;1, \xi_1) \,ds |\cF^{\xi}_t
]$ is an $\cF^{\xi}$-martingale which is equivalent to
%
%e5.5 #&#
\begin{equation}
\label{eq:xim} \bbE\biggl[ \int_t^u
\sigma^2 \frac{p_y}{p}(s,\xi_s;1, \xi_1)
\,ds \Big|\cF^{\xi}_t \biggr]=0 \qquad\forall u\in[t,1).
\end{equation}

Observe that by Theorem~\ref{th:fp} the distribution of $\xi_1$ and
that of $H^{-1}(1,V)$ coincide. Since we have established the
uniqueness in law of solutions to (\ref{eq:sdeY}) and $V$ is
independent of $B$, we can conclude that the processes $Y$ and $\xi
$ have the same distribution. Thus, from (\ref{eq:xim}) it follows that
for $u<1$
\begin{eqnarray*}
\bbE \biggl[ \int_t^u \sigma^2
\frac{p_y}{p} \bigl(s,Y_s;1, H^{-1}(1,V) \bigr) \,ds \Big|
\cF^{Y}_t \biggr]&=&\bbE \biggl[ \int_t^u
\sigma^2 \frac
{p_y}{p}(s,Y_s;1, Y_1)
\,ds \Big|\cF^{Y}_t \biggr]\\
&=&0.
\end{eqnarray*}
The above implies that $\bbE[ \int_0^t \sigma^2 \frac
{p_y}{p}(s,Y_s;1, H^{-1}(1,V)) \,ds |\cF^{Y}_t ]$ is an $\cF
^Y$-martingale. Therefore, $Y$ has the following decomposition with
respect to $\cF^Y$:
\[
Y_t=M_t- \int_0^t
\frac{\sigma^2 \rho}{2N} Y_s H_y(s,Y_s) \,ds, \qquad t
\in[0,1),
\]
where $M$ is an $\cF^Y$-martingale. On the other hand, $[M,M]_t=[Y,
Y]_t=\sigma^2 t$. Thus, by L\'evy's characterisation, $M_t= \sigma
B^Y_t$. Note that $(B^Y_t)_{t \in[0,1)}$ is a uniformly integrable
martingale; thus, we can define $B^Y_1=\lim_{t \rar1} B^Y_t$ so that
$(B^Y_t)_{t \in[0,1]}$ is a Brownian motion. This establishes the
desired decomposition on $[0,1]$ as $Y$ converges to a finite limit as
$t \rar1$.
\end{pf}

Theorem~\ref{t:insider} in conjunction with Proposition~\ref
{p:ioptimality} establish the existence of an equilibrium in our model.
%

%th5.2 #&#
\begin{theorem} \label{t:eq} Let $H^{\ast}$ and $p$ be the functions
defined in Theorem~\ref{th:fp}, and
\[
X^{\ast}_t=\int_0^t
\biggl\{-\frac{\sigma^2 \rho}{2N} Y^{\ast}_s H^{\ast
}_y
\bigl(s,Y^{\ast}_s \bigr) +\sigma^2
\frac{p_y}{p} \bigl(s,Y^{\ast}_s;1, H^{\ast
{-1}}(1,V)
\bigr) \biggr\} \,ds.
\]
Then $(H^{\ast}, X^{\ast})$ is an equilibrium.

Moreover, under $\cF^M$ the equilibrium demand evolves as
\[
Y^{\ast}_t=\sigma B^Y_t -
\frac{\sigma^2 \rho}{2N} \int_0^t
Y^{\ast}_s H^{\ast}_y
\bigl(s,Y^{\ast}_s \bigr) \,ds.
\]
\end{theorem}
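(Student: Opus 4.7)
The second claim, the $\cF^M$-dynamics of $Y^{\ast}$, is exactly the conclusion of Theorem \ref{t:insider}, so the task reduces to establishing the two equilibrium conditions: (i) $X^{\ast}$ maximises the insider's objective over $\cA(H^{\ast})$, and (ii) $U(G)$ is a true $(\cF^M,\bbP)$-martingale.

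For (i), the plan is to invoke Proposition \ref{p:ioptimality}. Theorem \ref{t:insider} directly delivers the bridge identity $H^{\ast}(1, Y^{\ast}_1) = V$, because $Y^{\ast}_1 = H^{\ast -1}(1,V)$ $\bbP^v$-a.s. It then remains to verify the admissibility conditions of Definition \ref{d:admissible} and the integrability (\ref{eq:anona}) required in Proposition \ref{p:ioptimality}. The bound $\bbE^v\int_0^1 H^{\ast 2}(s,Y^{\ast}_s)\,ds < \infty$ is immediate from $|H^{\ast}| \leq \|f\|_{\infty}$ (Lemma \ref{l:girsanov}); the $\bbP^v$-a.s.\ finiteness of $\int_0^1 |\alpha^{\ast}_s|\,ds$ follows by combining $H^{\ast}_y(s,\cdot) \leq C(1-s)^{-1/2}$ with the bounds on $|p_y/p|$ that already appear in the proof of Theorem \ref{t:insider} (in particular via the exponential estimate (\ref{eq:ryzbd})), and $\bbP^v$-a.s.\ continuity of $Y^{\ast}$ up to $t=1$.

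For (ii), the first step is to use $H^{\ast}(1, Y^{\ast}_1) = V$ to eliminate the jump term in (\ref{e:mmG}), so that $G$ is a continuous process with $G_t = -\frac{1}{N}\int_0^t Y^{\ast}_s\,dH^{\ast}(s, Y^{\ast}_s)$. Applying Ito's formula to $H^{\ast}(s, Y^{\ast}_s)$ under the $\cF^M$-dynamics supplied by Theorem \ref{t:insider} and invoking the backward heat equation $H^{\ast}_t + \frac{\sigma^2}{2} H^{\ast}_{yy} = 0$ gives $dH^{\ast}(s, Y^{\ast}_s) = \sigma H^{\ast}_y(s, Y^{\ast}_s)\, dB^Y_s - \frac{\sigma^2\rho}{2N} Y^{\ast}_s (H^{\ast}_y)^2\,ds$. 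Substituting into $dG_t$ and then applying Ito's formula to $U(G_t) = -e^{-\rho G_t}$, a direct cancellation---exactly the one that motivated the BSDE driver in (\ref{e:bsde})---annihilates the drift, and $U(G)$ is an $\cF^M$-local martingale; equivalently, $e^{-\rho G}$ is the stochastic exponential $\mathcal{E}\bigl(\int_0^{\cdot} \tfrac{\rho\sigma}{N} Y^{\ast}_s H^{\ast}_y(s, Y^{\ast}_s)\,dB^Y_s\bigr)$.

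The principal obstacle is promoting this local martingale to a true martingale on $[0,1]$, which is delicate since the integrand $Y^{\ast}_s H^{\ast}_y(s, Y^{\ast}_s)$ carries a $(1-s)^{-1/2}$ singularity that defeats a direct application of Novikov's criterion. The plan is to exploit the change-of-measure representation of Lemma \ref{l:girsanov}: the law of $Y^{\ast}$ under $\bbP$ is dominated by that of $\sigma W$ under $\bbQ$ with Radon--Nikodym derivative uniformly bounded by $e^{2cC}$, which transfers the verification to a computation involving only a Brownian motion, the exponential estimate (\ref{e:Wdominates}) and the sharp density control (\ref{e:Ydensity}). A localisation-then-limit argument along the stopping times $\tau_n = \inf\{t : |Y^{\ast}_t| > n\}$, combined with these bounds, should yield $\bbE[\sup_{t \leq 1}|U(G_t)|] < \infty$ and hence uniform integrability; the martingale property on the whole interval then follows by continuity of $G$ and standard approximation.
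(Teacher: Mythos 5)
Your overall architecture coincides with the paper's: the $\cF^M$-dynamics and the bridge condition are read off from Theorem \ref{t:insider}, optimality is delegated to Proposition \ref{p:ioptimality} after checking admissibility (which is immediate from $|H^{\ast}|\le\|f\|_\infty$ and the semimartingale property of $Y^{\ast}$ up to $t=1$), and It\^o's formula exhibits $-U(G)$ as the stochastic exponential $\mathcal{E}\bigl(\tfrac{\sigma\rho}{N}\int_0^{\cdot}Y^{\ast}_sH^{\ast}_y(s,Y^{\ast}_s)\,dB^Y_s\bigr)$. Where you diverge is the final promotion to a true martingale: the paper changes measure to a $\bbQ$ under which $W^Y=Y^{\ast}/\sigma$ is Brownian, observes that $-U(G)M^{\ast}$ is the stochastic exponential of $\int\tfrac{\sigma\rho}{2N}Y^{\ast}H^{\ast}_y\,dW^Y$, and closes with Kazamaki via (\ref{e:Wdominates}); you instead aim for uniform integrability through $\bbE[\sup_{t\le1}|U(G_t)|]<\infty$. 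Your route is viable, but not for the reason you give: neither the density bound (\ref{e:Ydensity}) nor the pointwise bound $H^{\ast}_y\le C(1-t)^{-1/2}$ controls the exponent, because rewriting $\int_0^tY^{\ast}H^{\ast}_y\,dB^Y$ in terms of $\int_0^tY^{\ast}H^{\ast}_y\,dY^{\ast}$ produces an extra $+\tfrac{\sigma\rho}{2N}\int_0^t(Y^{\ast}_sH^{\ast}_y)^2ds$ whose exponential moments are not available from those estimates. The step that saves you is an exact cancellation: this quadratic term is annihilated by the $-\tfrac{\sigma^2\rho^2}{2N^2}\int_0^t(Y^{\ast}_sH^{\ast}_y)^2ds$ already present in the stochastic exponential, leaving $-U(G_t)=\exp\bigl(\tfrac{\rho}{N}\int_0^tY^{\ast}_sH^{\ast}_y(s,Y^{\ast}_s)\,dY^{\ast}_s\bigr)$, and the integral here is pathwise bounded by $K(1+\sup_{s\le1}|Y^{\ast}_s|)$ by exactly the integration-by-parts argument behind (\ref{e:Wdominates}); since $\sup_{s\le1}|Y^{\ast}_s|$ has Gaussian tails by the time-change in Lemma \ref{l:SD} (or by transferring to $\sigma W$ via the bounded density $M_1\le e^{2cC}$), the sup-bound and hence the martingale property follow. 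So your proposal is correct once that cancellation is made explicit; without it the stated bounds do not suffice, and the paper's Bayes-plus-Kazamaki route is the cleaner way to the same end.
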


\begin{pf} Note that $H^{\ast}$ is a bounded function being a
solution of heat equation with a bounded terminal condition. Thus,
conditions (\ref{a:Hintegrability}) and (\ref{a:Xintegrability}) are
automatically satisfied. Moreover, Theorem~\ref{t:insider} yields that
$X^*$ is a $(P^v,\cF^I)$-semimartingale and $\bbP^v(H^*(1,Y^*_1)=v)=1$.
Thus, Proposition~\ref{p:ioptimality} yields that $X$ is admissible and
optimal strategy for the insider given $H^*$.

Thus, it remains to verify the zero-utility gain condition of the
market makers, that is, to prove that $U(G)$ is an $\cF^M$-martingale.
Recall from Theorem~\ref{t:insider} that with this choice of $X^{\ast
}$, $Y^{\ast}$ solves
%
%e5.6 #&#
\begin{equation}
\label{eq:YP} dY_t=\sigma\,dB^Y_t -
\frac{\sigma^2 \rho}{2N} Y_t H^{\ast
}_y(t,Y_t)
\,dt.
\end{equation}
Thus, It\^{o}'s formula together with the conditions on $H^{\ast}$ yields
\[
U(G_t)= -\exp \biggl(\frac{\sigma\rho}{N}\int_0^t
Y^{\ast}_s H^{\ast
}_y
\bigl(s,Y^{\ast}_s \bigr) \,dB^Y_s-
\frac{\sigma^2 \rho^2}{2 N^2}\int_0^t
\bigl(Y^{\ast}_s H^{\ast}_y
\bigl(s,Y^{\ast}_s \bigr) \bigr)^2 \,ds \biggr).
\]
Clearly, $-U(G)$ is an exponential local martingale.

Next, observe in view of the absolute continuity relationship between
the laws of $Y$ and $\sigma W$ as established in Lemma~\ref{l:girsanov}
$(\frac{1}{M^*_t})_{t \in[0,1]}$ is a strictly positive $\bbP
$-martingale, where
\[
M^*_t= \exp \biggl(-\frac{\rho}{2N} \int_0^t
Y^*_s H_y \bigl(s,Y^*_s \bigr)
\,dY_s- \frac
{\sigma^2 \rho^2}{8 N^2}\int_0^t
\bigl(Y^*_s H_y \bigl(s,Y^*_s \bigr)
\bigr)^2 \,ds \biggr).
\]
Therefore, if we define an equivalent measure, $\bbQ$, on $\cF^{Y^*}_1$
by $\frac{d\bbP}{d\bbQ}=\frac{1}{M^*_1}$, then $W^Y:=\frac
{Y^*}{\sigma
}$ is a $\bbQ$-Brownian motion. Consequently, $U(G)$ is a $\bbP
$-martingale if and only if $U(G)M^*$ is $\bbQ$-martingale.

On the other hand, a straightforward application of integration by
parts formula yields
\[
d \bigl(-U(G_t)M^*_t \bigr)=-U(G_t)M^*_t
\frac{\sigma\rho}{2N}Y_t H_y(t, Y_t)
\,dW^Y_t,
\]
that is, $-U(G)M^*$ is the stochastic exponential of $\int_0^{\cdot
}\frac{\sigma\rho}{2N}Y_t H_y(t, Y_t)\,dW^Y_t$. Moreover,
\[
\biggl\llvert\int_0^{1}Y_t
H_y(t, Y_t)\,dW^Y_t \biggr\rrvert
\leq K \bigl(1+\bigl |W^Y_1\bigr| \bigr)
\]
by (\ref{e:Wdominates}). Since $|W^Y_1|$ has all exponential moments,
we conclude that $-U(G)M^*$ is a $\bbQ$-martingale using Kazamaki's
criterion (see, e.g., Theorem III.44 in \cite{P}).
\end{pf}

The above theorem shows that the equilibrium demand process has a drift
in its own filtration. This is in contrast with the other possible
generalisations found in the literature (for the change in the pattern
of private information arrival see~\cite{BP}, for a risk averse insider
see \cite{baruch} and for competition among insiders see \cite{bcw}) of
the original models of \cite{Kyle} and \cite{B} lead to equilibria with
total demand being a martingale in its own filtration.

Moreover, as $H_y>0$ the equilibrium total demand process is mean
reverting. This suggests a theoretical explanation for the emergence of
mean reversion in the specialists' inventories, which has strong
empirical support (see, e.g., \cite{BR05,HNV98,HS97}
and \cite{MS93}): The mean reversion appears as a result of the
insider's reaction to the market maker's demand for risk sharing. The
speed of mean reversion is not constant and depends on the market
makers' level of effective risk aversion, $\frac{\rho}{N}$, as well as
the level of informational asymmetry, $\sigma$, in a nontrivial way
due to the definition of $H_y$. This theoretical implication is in line
with the empirical findings of \cite{HNV98} who observe that the speed
of mean reversion depends on the inventory levels of the market makers
in the London Stock Exchange.

Closely related to the observation that the total order has a drift, is
the fact that the equilibrium price is no longer a martingale under the
physical measure. Moreover, $Y^{\ast}$ and, therefore, $H^{\ast
}(t,Y^{\ast}_t)$ are mean-reverting processes. This mean-reversion
property of $Y^{\ast}$ also entails that Kyle's conclusion of constant
market depth, which is the order size necessary to move the prices by
one unit, does not hold in this model. Indeed,
\[
dH^{\ast}_y \bigl(t,Y^{\ast}_t
\bigr)=H^{\ast}_{yy} \bigl(t,Y^{\ast}_t
\bigr) \sigma\,dB^Y_t -\frac{\sigma^2 \rho}{2 N}
Y^{\ast}_t H^{\ast}_y
\bigl(t,Y^{\ast}_t \bigr)H^{\ast
}_{yy}
\bigl(t,Y^{\ast}_t \bigr) \,dt
\]
implies that $H^{\ast}_y(t,Y^{\ast}_t)$ is not a martingale since
$H^{\ast}$ is not linear. In particular, if $H^{\ast}$ is such that
$H^{\ast}_{yy}(1,y) = -H^{\ast}_{yy}(1,-y) $ with $H^{\ast
}_{yy}(1,y)\leq0$ for $y\geq0$, then $yH^{\ast}_{yy}(t,y)\leq0$, and
thus $H^{\ast}_y(t,Y^{\ast}_t)$ is a submartingale. Consequently,
$\bbE
[H^{\ast}_y(t,Y^{\ast}_t)]$ has an upward slope, that is, the
executions costs increase in time in our model. This is consistent with
the empirical findings of U-shaped patterns of execution costs on NYSE
(see \cite{mrr}).
%s6 #&#
\section{Conclusion and further remarks}\label{s:conclusion}
We have solved a long-standing open problem first posed by Subrahmanyam
in \cite{sub} of existence of an equilibrium in a financial market with
asymmetrically informed traders and risk averse market makers in a
continuous-time version of a model first introduced by Kyle \cite
{Kyle}. The equilibrium turns out to be the solution of a nonstandard
FBSDE. We have solved this FBSDE by transforming it into a
forward--backward system of stochastic and partial differential
equations and employing a novel application of Schauder's fixed-point theorem.

Consistent with the empirical studies on the inventories of market
makers we find that the risk aversion of market makers causes mean
reversion in the equilibrium total demand (i.e., collective inventory
of the market makers). This implies that the informed trader's strategy
ceases to be \emph{inconspicuous} and, therefore, provides the first
example of an equilibrium in a Kyle-type model which does not satisfy
inconspicuousness condition.
%in contrast with the earlier literature, inconspicuousness is not a
%necessary condition for the existence of equilibrium.
The driving force behind this result is that the risk aversion of
market makers makes them unwilling to bear risk. Instead of paying the
extra compensation for the inventory risk, the informed trader chooses
to absorb a part of large fluctuations in the market makers'
inventories, that is, participates in a risk sharing.

We also show that the sensitivity of prices to the total order, which
is the reciprocal of the market depth, can be a sub-martingale for
certain model parameters. This implies that the execution costs are, on
average, increasing toward the end of a trading period, which is
consistent with the empirical results obtained in \cite{mrr}.

Whereas, for general set of the parameters, the reciprocal of the
market depth is not a sub-martingale, it is not a martingale either.
This theoretical conclusion is in discord with the results obtained in
\cite{Kyle}, as well as in \cite{baruch}, who studies the effect of
risk averse insider on the equilibrium, and in \cite{BP}, who extend
Kyle's model to the case when the informed trader receives a
fluctuating signal over time. In fact, Kyle in \cite{Kyle} made a
conjecture that:

\begin{quote}
[$\cdots$] neither increasing nor decreasing depth is consistent with
behavior by the informed trader which is ``stable'' enough to sustain
an equilibrium. If depth ever increases, the insider wants to
destabilize prices (before the increase in depth) to generate unbounded
profits. If depth ever decreases, the insider wants to incorporate all
of his private information into the price immediately.
\end{quote}

Thus, the results obtained from our model demonstrate that the
necessity of risk sharing between the informed trader and the market
makers makes exploitation of systematic movements in market depth
unprofitable for the informed trader. Indeed, if the trader attempts to
follow the strategy outlined by Kyle, that is, acquiring a large
position when depth is lower in order to liquidate at more favourable
price when depth is higher, she would be moving the total order away
from its mean, leaving the market makers exposed to the risk of large
orders. Violation of risk sharing would cause the market makers to
adjust the prices eliminating favourable liquidation opportunities for
the informed trader. Thus, contrary to Kyle, such a strategy does not
lead to unbounded profits.

Moreover, the appearance of systematic changes in market depth as a
result of market makers' risk aversion demonstrates that competition of
the informed traders, as in \cite{bcw}, is not the only possible
mechanism that can make the strategy proposed by Kyle unprofitable,
thus leading to a drift in the reciprocal of the market depth.

These observations show that a mere introduction of risk averse market
makers to the setting of \cite{Kyle} changes the equilibrium outcome
fundamentally.

% imsref loaded by akundreckaite, 2015-10-09 09:01:28
% imsref loaded by akundreckaite, 2015-10-09 09:36:42
%

%\begin{appendix}
%\section{}
%\end{appendix}

% zodis "Acknowledgments" paliekamas pagal autoriu
%\section*{Acknowledgments}

%\begin{supplement}[id=suppA]
%\sname{Supplement A}
%\stitle{}
%\slink[doi]{10.1214/00-AAPXXXXSUPP} %[doi,text={...}] - jei reikia
%suskaldyti doi
%\sdatatype{.pdf}
%\sfilename{aapXXXX\_supp.pdf}
%\sdescription{}
%\end{supplement}

%\begin{thebibliography}{99}
%\bibitem[\protect\citeauthoryear{}{}]{r1}
%\bibitem{r1}
%\end{thebibliography}

\printaddresses
\end{document}